\begin{document}
\baselineskip = 16pt

\newcommand \ZZ {{\mathbb Z}}
\newcommand \NN {{\mathbb N}}
\newcommand \RR {{\mathbb R}}
\newcommand \PR {{\mathbb P}}
\newcommand \AF {{\mathbb A}}
\newcommand \GG {{\mathbb G}}
\newcommand \QQ {{\mathbb Q}}
\newcommand \CC {{\mathbb C}}
\newcommand \bcA {{\mathscr A}}
\newcommand \bcC {{\mathscr C}}
\newcommand \bcD {{\mathscr D}}
\newcommand \bcF {{\mathscr F}}
\newcommand \bcG {{\mathscr G}}
\newcommand \bcH {{\mathscr H}}
\newcommand \bcM {{\mathscr M}}
\newcommand \bcJ {{\mathscr J}}
\newcommand \bcL {{\mathscr L}}
\newcommand \bcO {{\mathscr O}}
\newcommand \bcP {{\mathscr P}}
\newcommand \bcQ {{\mathscr Q}}
\newcommand \bcR {{\mathscr R}}
\newcommand \bcS {{\mathscr S}}
\newcommand \bcV {{\mathscr V}}
\newcommand \bcW {{\mathscr W}}
\newcommand \bcX {{\mathscr X}}
\newcommand \bcY {{\mathscr Y}}
\newcommand \bcZ {{\mathscr Z}}
\newcommand \goa {{\mathfrak a}}
\newcommand \gob {{\mathfrak b}}
\newcommand \goc {{\mathfrak c}}
\newcommand \gom {{\mathfrak m}}
\newcommand \gon {{\mathfrak n}}
\newcommand \gop {{\mathfrak p}}
\newcommand \goq {{\mathfrak q}}
\newcommand \goQ {{\mathfrak Q}}
\newcommand \goP {{\mathfrak P}}
\newcommand \goM {{\mathfrak M}}
\newcommand \goN {{\mathfrak N}}
\newcommand \uno {{\mathbbm 1}}
\newcommand \Le {{\mathbbm L}}
\newcommand \Spec {{\rm {Spec}}}
\newcommand \Gr {{\rm {Gr}}}
\newcommand \Pic {{\rm {Pic}}}
\newcommand \Jac {{{J}}}
\newcommand \Alb {{\rm {Alb}}}
\newcommand \Corr {{Corr}}
\newcommand \Chow {{\mathscr C}}
\newcommand \Sym {{\rm {Sym}}}
\newcommand \Prym {{\rm {Prym}}}
\newcommand \cha {{\rm {char}}}
\newcommand \eff {{\rm {eff}}}
\newcommand \tr {{\rm {tr}}}
\newcommand \Tr {{\rm {Tr}}}
\newcommand \pr {{\rm {pr}}}
\newcommand \ev {{\it {ev}}}
\newcommand \cl {{\rm {cl}}}
\newcommand \interior {{\rm {Int}}}
\newcommand \sep {{\rm {sep}}}
\newcommand \td {{\rm {tdeg}}}
\newcommand \alg {{\rm {alg}}}
\newcommand \im {{\rm im}}
\newcommand \gr {{\rm {gr}}}
\newcommand \op {{\rm op}}
\newcommand \Hom {{\rm Hom}}
\newcommand \Hilb {{\rm Hilb}}
\newcommand \Sch {{\mathscr S\! }{\it ch}}
\newcommand \cHilb {{\mathscr H\! }{\it ilb}}
\newcommand \cHom {{\mathscr H\! }{\it om}}
\newcommand \colim {{{\rm colim}\, }} 
\newcommand \End {{\rm {End}}}
\newcommand \coker {{\rm {coker}}}
\newcommand \id {{\rm {id}}}
\newcommand \van {{\rm {van}}}
\newcommand \spc {{\rm {sp}}}
\newcommand \Ob {{\rm Ob}}
\newcommand \Aut {{\rm Aut}}
\newcommand \cor {{\rm {cor}}}
\newcommand \Cor {{\it {Corr}}}
\newcommand \res {{\rm {res}}}
\newcommand \red {{\rm{red}}}
\newcommand \Gal {{\rm {Gal}}}
\newcommand \PGL {{\rm {PGL}}}
\newcommand \Bl {{\rm {Bl}}}
\newcommand \Sing {{\rm {Sing}}}
\newcommand \spn {{\rm {span}}}
\newcommand \Nm {{\rm {Nm}}}
\newcommand \inv {{\rm {inv}}}
\newcommand \codim {{\rm {codim}}}
\newcommand \Div{{\rm{Div}}}
\newcommand \CH{{\rm{CH}}}
\newcommand \sg {{\Sigma }}
\newcommand \DM {{\sf DM}}
\newcommand \Gm {{{\mathbb G}_{\rm m}}}
\newcommand \tame {\rm {tame }}
\newcommand \znak {{\natural }}
\newcommand \lra {\longrightarrow}
\newcommand \hra {\hookrightarrow}
\newcommand \rra {\rightrightarrows}
\newcommand \ord {{\rm {ord}}}
\newcommand \Rat {{\mathscr Rat}}
\newcommand \rd {{\rm {red}}}
\newcommand \bSpec {{\bf {Spec}}}
\newcommand \Proj {{\rm {Proj}}}
\newcommand \pdiv {{\rm {div}}}
\newcommand \wt {\widetilde }
\newcommand \ac {\acute }
\newcommand \ch {\check }
\newcommand \ol {\overline }
\newcommand \Th {\Theta}
\newcommand \cAb {{\mathscr A\! }{\it b}}

\newenvironment{pf}{\par\noindent{\em Proof}.}{\hfill\framebox(6,6)
\par\medskip}

\newtheorem{theorem}[subsection]{Theorem}
\newtheorem{conjecture}[subsection]{Conjecture}
\newtheorem{proposition}[subsection]{Proposition}
\newtheorem{lemma}[subsection]{Lemma}
\newtheorem{remark}[subsection]{Remark}
\newtheorem{remarks}[subsection]{Remarks}
\newtheorem{definition}[subsection]{Definition}
\newtheorem{corollary}[subsection]{Corollary}
\newtheorem{example}[subsection]{Example}
\newtheorem{examples}[subsection]{examples}

\title{A remark on Algebraic cycles on cubic fourfolds}
\author{Kalyan Banerjee}

\address{Harish Chandra Research Institute, India}

\email{banerjeekalyan@hri.res.in}

\begin{abstract}
In this short note we try to generalize the Clemens-Griffiths criterion of non-rationality for smooth cubic threefolds to the case of smooth cubic fourfolds.
\end{abstract}
\maketitle

\section{Introduction}
In algebraic geometry, one of the most important problems is to detect the rationality of a given variety. In the remarkable paper \cite{CG} by Clemens and Griffiths it has been proved that a smooth cubic threefold is non-rational. The method was elegant. They proved that a rational, smooth threefold must have its intermediate jacobian isomorphic to product of Jacobian of curves. Further it has been proved that no smooth cubic threefold satisfies this criterion. The next question is a smooth cubic fourfold rational, at least a very general one. Recent work by Hassett suggests that there exists smooth cubic fourfolds which are rational, \cite{H}. In this paper we propose a criterion similar to that of Clemens and Griffiths, by studying the group of algebraically trivial one cycles modulo rational equivalence(denoted by $A_1$) of the cubic fourfold.

\medskip

We first prove that the group of algebraically trivial one cycles on a cubic fourfold is of essential dimension $2$, meaning that $A_1$ of a cubic fourfold admits a surjective map from $A_0$ (algebraically trivial zero cycles modulo rational equivalence) of a smooth projective surface, and that it is not dominated by the Jacobian of a smooth projective curve, this later fact is due to Schoen \cite{Sc}. Then we prove that $A_1$ of a cubic fourfold is isomorphic to the kernel of the push-forward at the level of $A_0$ induced by a correspondence between two surfaces, which occur in a very natural way. At the same time we prove that the kernel of the above homomorphism at the level of $A_0$ cannot admit a surjective map from the $A_0$ of some other smooth, projective, surface. This point is subtle. Let $X$  be the cubic fourfold and $T,S$ are the surfaces and $\Gamma$ is a correspondence on $T\times S$. Let us consider $\ker(\Gamma_*)$ from $A_0(T)$ to $A_0(S)$. Then we prove that there does not exists a surface $S'$, and correspondences $\Gamma_1$ supported on $S'\times T$, and $\Gamma_2$ supported on $S'\times X$, such that image of $\Gamma_{1*}$ is the kernel of $\Gamma_*$ and the diagram

$$
  \diagram
 A_0(S')\ar[dd]_-{} \ar[rr]^-{\Gamma_{2*}} & & A_1(X)) \ar[dd]^-{} \\ \\
  A_0(S') \ar[rr]^-{\Gamma_{1*}} & & \ker(A_0(T)\to A_0(S))
  \enddiagram
  $$
commutes.

\begin{theorem}
$A_1$ of a smooth cubic fourfold  is isomorphic to the kernel of the push-forward induced by a correspondence between two fixed smooth projective surfaces and it is not dominated by the $A_0$ of a single smooth projective surface (in the sense mentioned above).
\end{theorem}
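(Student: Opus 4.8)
The plan is to reduce the computation of $A_1(X)$ to a conic bundle, to read $A_1(X)$ off a decomposition of the motive of that conic bundle, and then to play the resulting Prym-type description of $A_1(X)$ against Schoen's non-representability theorem. For the reduction, fix a general line $\ell\subset X$ and let $\rho\colon\Bl_\ell X\to\PR^3$ be the morphism induced by linear projection away from $\ell$: over the point of $\PR^3$ corresponding to a plane $\Pi\supset\ell$ the fibre is the conic residual to $\ell$ in the plane cubic $\Pi\cap X$, so $\rho$ is a conic bundle, whose discriminant is a surface $S\subset\PR^3$ carrying an \'etale double cover $\pi\colon T\to S$ (the two lines into which the conic degenerates); for $\ell$ general I expect $S,T$ smooth, and otherwise one passes to resolutions, which changes neither $A_0$ nor $A_1$. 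Since $\Bl_\ell X\to X$ is the blow-up of the smooth curve $\ell\cong\PR^1$, the blow-up formula gives $A_1(\Bl_\ell X)\cong A_1(X)$, the correction terms being algebraically trivial Chow groups of $\PR^1$ in degree $\le 0$, hence zero.

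Next I would invoke the decomposition of the relative diagonal of a standard conic bundle $g\colon Z\to B$ with smooth discriminant $S$ and \'etale double cover $\pi\colon T\to S$: for all $k$ there is a natural isomorphism
\[
\CH_k(Z)\ \cong\ \CH_k(B)\ \oplus\ \CH_{k-1}(B)\ \oplus\ \ker\!\bigl(\pi_*\colon \CH_{k-1}(T)\to \CH_{k-1}(S)\bigr),
\]
the last summand being carried by the cycle $W\subset T\times Z$ whose fibre over $t$ is the line of $Z$ selected by $t$. Taking $Z=\Bl_\ell X$, $B=\PR^3$, $k=1$ and passing to algebraically trivial cycles kills the first two summands (since $A_1(\PR^3)=A_0(\PR^3)=0$), so
\[
A_1(X)\ \cong\ \ker\!\bigl(\Gamma_*\colon A_0(T)\to A_0(S)\bigr),\qquad \Gamma=\text{graph of }\pi,
\]
which is the isomorphism of the theorem, with $T,S$ the \'etale double cover and the discriminant of $\rho$. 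Pushing $W$ to $T\times X$ also exhibits $A_0(T)$ as surjecting onto $A_1(X)$, so $A_1(X)$ has essential dimension $\le 2$; with Schoen's theorem that it is not dominated by the Jacobian of a curve, the essential dimension is exactly $2$, and in particular $\ker(\Gamma_*)$ is not, even up to isogeny, an abelian variety.

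It remains to prove that there is no compatible domination by a single surface. Suppose for contradiction that there were a smooth projective surface $S'$, a correspondence $\Gamma_1$ on $S'\times T$ with $\im(\Gamma_{1*})=\ker(\Gamma_*)$, and a correspondence $\Gamma_2$ on $S'\times X$ making the displayed square commute. Writing $\psi$ for the correspondence inducing the isomorphism $A_1(X)\xrightarrow{\ \sim\ }\ker(\Gamma_*)$ above, commutativity forces $\Gamma_{1*}=\psi_*\circ\Gamma_{2*}$, so $\Gamma_{2*}\colon A_0(S')\to A_1(X)$ is surjective and its kernel, like its image, is cut out by the algebraic cycle $\Gamma_2$. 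I would try to turn this into a contradiction by spreading the whole configuration $(X,\ell,S,T,\Gamma,S',\Gamma_1,\Gamma_2)$ out over an \'etale neighbourhood in the moduli of cubic fourfolds and running a Bloch--Srinivas-type decomposition-of-the-diagonal argument in the family, the aim being to show that a \emph{correspondence}-theoretic surjection $A_0(S')\twoheadrightarrow A_1(X)$ existing relatively would make $A_1$ of a very general cubic fourfold too small --- either representable or dominated by the Jacobian of a curve --- contradicting Schoen. I expect this last implication to be the main obstacle, and it is exactly where the subtlety lies: the cycle $W$ of the previous paragraph already \emph{looks like} a correspondence-theoretic surjection $A_0(T)\twoheadrightarrow A_1(X)\cong\ker(\Gamma_*)$, so the heart of the matter is to show that no such surjection can be made compatible with the isomorphism in the strict sense of the diagram --- equivalently, that $\ker(\Gamma_*)$ is not the image of any idempotent correspondence on $T$ --- and for this one must feed in Schoen's non-representability together with a close analysis of the particular correspondence $\Gamma$. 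Isolating which feature of $\Gamma$ forbids it is the crux.
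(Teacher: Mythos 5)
Your proposal establishes at most half of the theorem. The second assertion --- that $\ker(\pi_*\colon A_0(\wt{S})\to A_0(S))\cong A_1(X)$ is not dominated by the $A_0$ of a single surface in the sense of the commutative square --- is exactly where you stop: you correctly isolate the tension (the universal-line correspondence already gives a surjection $A_0(\wt{S})\to A_1(X)$, so only compatibility with the diagram can fail), but you then write that you ``expect this last implication to be the main obstacle'' and leave it unproved. That is a restatement of what must be shown, not a proof. The paper's own argument for this half is not a Bloch--Srinivas decomposition in families, as you propose, but a monodromy--countability argument: one cuts the surfaces by Lefschetz pencils; uses that for a surface with non-representable $A_0$ the kernel of $J(C_t)\to A_0$ is countable for general $t$ (Mumford--Roitman); shows via spreading out and the Picard--Lefschetz formula that the image of $J(C_t)$ under the relevant correspondence is a monodromy-stable sub-Hodge structure, hence zero or everything; and derives a contradiction from an uncountable set (a Jacobian modulo a countable kernel) being forced into a countable one (the kernel of $J(S_{1t})\to A_0(S_2)$). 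To complete your proof you need an argument of this type, or a genuine substitute; the ``feature of $\Gamma$'' you are looking for is precisely the irreducibility of the monodromy action combined with non-representability of $A_0$ of the surfaces involved.

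For the first assertion your route is genuinely different from the paper's, and it also needs more justification than you supply. You invoke a relative-diagonal decomposition of a standard conic bundle to read off $A_1(X)\cong\ker(\pi_*\colon A_0(T)\to A_0(S))$ directly over the base $\PR^3$; the paper instead restricts to hyperplane sections $X_t$ (cubic threefolds, where Beauville's Prym theory gives $\ker(J(\wt{S_t})\to A_1(X_t))=\pi^*J(S_t)$) and then globalizes by a monodromy and spreading argument. Your route would be cleaner if the decomposition were off the shelf, but the formula you quote is standard for conic bundles over a base of dimension at most two with smooth discriminant and \'etale double cover; over $\PR^3$ the discriminant surface in general acquires finitely many singular points where the conic degenerates to a double line and the cover $T\to S$ ramifies there, so ``passing to resolutions'' is not a one-line fix --- you must verify that the decomposition survives resolving $S$ and $T$, and you give neither a reference nor an argument for it in this generality.
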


\smallskip

This raises a natural criterion for non-rationality of a cubic fourfold interms of $A_0$ of smooth projective surfaces. We define that the $A_1$ of a cubic fourfold is representable upto dimension two if it is dominated by a sum of $A_0$'s of smooth projective curves and smooth projective surfaces. It follows that if a cubic is rational then it is representable upto dimension $2$ and further the kernel of the homomorphism from the finite direct sum of $A_0$ of smooth projective surfaces and curves to the $A_1$ of the cubic is isomorphic to a finite direct sum of smooth projective surfaces and curves . Since any smooth cubic fourfold is unirational the group $A_1$ is always dominated by a finite direct sum of $A_0$ of surfaces and curves. But suppose that the kernel of this homomorphism from a finite direct sum of $A_0$'s of surfaces and curves to the group $A_1$ of the cubic is not isomorphic to a finite direct sum of $A_0$'s of surfaces and curves, then it is non-rational. The main result of this paper is as follows:

\smallskip

So the natural question is whether for a very general smooth cubic fourfold the the kernel of the homomorphism from a finite sum of $A_0$'s of smooth projective surfaces and curves to $A_1$ of the cubic  is not isomorphic to a finite sum of $A_0$'s of smooth projective surfaces and curves, or can we say that the smooth cubic fourfolds satisfying the criterion that the above kernel being  isomorphic to a finite sum of $A_0$'s of smooth projective surfaces  and curves are parametrized by a countable union of the Zariski closed subsets in  the moduli of cubic fourfolds.

\medskip
In  this context we would like to mention that recently C.Voisin has an approach to the non-rationality of a smooth cubic fourfold in terms of Chow theoretic decomposition of the diagonal of the cubic, \cite{CP}, \cite{V1},\cite{V2}. According to that approach if a smooth cubic fourfold does not admit the Chow theoretic decomposition of the diagonal then its not stable rational, hence non-rational. We are investigating the relations between our approach and that of Voisin. This issue will be dealt in a sequel of articles.

{\small \textbf{Acknowledgements:} The author thanks Kapil Paranjape for his constant encouragement and for carefully listening about the arguments of this paper from the author. The author thanks R.Laterveer for a helpful discussion related to the theme of this paper.}

Throughout this text we work over the field of complex numbers.

\section{Conic bundle construction and the essential dimension of Chow group of a cubic fourfold}
\label{section1}

We work over the field of complex numbers. Let $X$ be a cubic fourfold. Let $L$ be a line on $X$ (since $X$ is unirational, such a line exists). Then we project $X$ onto $\PR^3$ from the line $L$, that is we consider the rational map induced by the line $L$. Then we blow up the indeterminacy locus of this rational map, to get $X_L$, and a regular map from $X_L$ to $\PR^3$. The inverse image of a point in $\PR^3$ is a conic in $\PR^2$. Consider the surface $S$ in $\PR^3$ such that the inverse image of a point is a union of two lines. This surface is called the discriminant surface. Let $T$ be the double cover of $S$ sitting inside $F(X)$. Then we prove the following:

\begin{theorem}
The map from $A_0(T)$ to $A_1(X)$ is onto, and the composition $Z_*f^*$ sends $A_0(S)$ to zero under this composition.
\end{theorem}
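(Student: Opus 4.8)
The plan is to treat the two assertions separately: the surjectivity is the real content, while the vanishing of $Z_*f^*$ is formal. I keep the notation of the section: $\sigma\colon X_L\to X$ is the blow-down of $L$, $E\subset X_L$ the exceptional divisor, $\pi\colon X_L\to\PR^3$ the conic bundle, $S\subset\PR^3$ its discriminant surface, $f\colon T\to S$ the double cover with $T\subset F(X)$, and $Z\subset T\times X$ the universal line. Thus ``the map from $A_0(T)$ to $A_1(X)$'' is $Z_*$, $Z_*[t]=[\ell_t]$, and ``$Z_*f^*$'' is the composite of the transfer $f^*\colon A_0(S)\to A_0(T)$ with $Z_*$; for $s\in S$ outside the lower-dimensional locus over which $C_s$ is non-reduced one has $f^*[s]=[t_1]+[t_2]$ with $f^{-1}(s)=\{t_1,t_2\}$ and $\ell_{t_1}\cup\ell_{t_2}=C_s$.

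For the surjectivity I would first descend to $X_L$. By the blow-up formula $\CH_1(X_L)\cong\CH_1(X)\oplus\CH_0(L)$, and since $\CH_0(L)=\CH_0(\PR^1)=\ZZ$ is generated by a rigid class, $\sigma_*$ is surjective (in fact an isomorphism) on $A_1$; as $\sigma$ sends the strict transform $\widetilde\ell_t$ to $\ell_t$, it is enough to prove that $A_1(X_L)$ is generated by the differences $[\widetilde\ell_t]-[\widetilde\ell_{t'}]$. For this I would invoke the relative Chow--K\"unneth (motivic) decomposition of a conic bundle over a smooth base: the motive of $X_L$ splits as a piece pulled back from $\PR^3$ plus a ``new'' piece whose Chow groups are governed by those of the double cover $T$ of the discriminant, the comparison being precisely the cylinder correspondence on $T\times X_L$. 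Passing to algebraically trivial $1$-cycles, the $\PR^3$-piece contributes $A_1(\PR^3)=0$, so $A_1(X_L)$ is the corresponding quotient of $\CH_0(T)$; concretely every algebraically trivial $1$-cycle on $X_L$ becomes rationally equivalent to a combination of $1$-cycles supported on fibres of $\pi$ over points of $S$, and over a general such point the fibre is a reduced pair of lines $\widetilde\ell_{t_1}\cup\widetilde\ell_{t_2}$ with both components in $T$; the combination has total degree $0$ because an algebraically trivial $1$-cycle pairs to zero with $\sigma^*H$. Hence $A_1(X_L)=\mathrm{im}(Z^{X_L}_*)$, and pushing down, $A_1(X)=\mathrm{im}(Z_*)$.

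The vanishing of $Z_*f^*$ on $A_0(S)$ then falls out. For general $s\in S$,
\[
Z_*f^*[s]=[\ell_{t_1}]+[\ell_{t_2}]=[C_s]=\sigma_*[\pi^{-1}(s)].
\]
Restricting $\pi$ over a general line $B\subset\PR^3$ through $s$, the surface $\pi^{-1}(B)$ (which for general $B$ is the cubic surface $X\cap\langle L,B\rangle$ with its conic-bundle structure over $B\cong\PR^1$) has all its fibres linearly equivalent as divisors; so $[\pi^{-1}(s)]=[\pi^{-1}(s')]$ in $\CH_1(\pi^{-1}(B))$, hence in $\CH_1(X_L)$, and since $\PR^3$ is connected the class $c_0:=\sigma_*[\pi^{-1}(s)]$ is independent of $s$. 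Therefore, for $\alpha=\sum n_is_i\in A_0(S)$---for which $\sum n_i=0$ by algebraic triviality---we get $Z_*f^*(\alpha)=\sum n_i[C_{s_i}]=(\sum n_i)c_0=0$ in $\CH_1(X)$, a fortiori in $A_1(X)$.

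The main obstacle is entirely within the surjectivity: making it precise that every algebraically trivial $1$-cycle on $X_L$ reduces to combinations of the $\widetilde\ell_t$. The delicate points are the ``horizontal'' (multisection) part of such a cycle and the role of the distinguished line $L$, which is a degree-two multisection, not a fibre component, of $\pi$. A concrete substitute for the motivic input is to work over a general line $B\subset\PR^3$: the cubic surface $\pi^{-1}(B)$ is rational, so carries no algebraically trivial divisors, and a computation with its twenty-seven lines shows that, modulo the span of the fibre-component lines (the $\ell_t$ with $t\in f^{-1}(B\cap S)$), its Picard group is of rank one, detected by the base-degree $D\mapsto D\cdot[\text{fibre}]$; hence any two section lines of $\pi^{-1}(B)$ differ by a combination of these $\ell_t$. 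Propagating this across the family of such surfaces, and reconciling the base-degree-two line $L$ with the $\ell_t$, is the step that carries all the weight; granting it, one obtains the exact sequence $A_0(T)\xrightarrow{Z^{X_L}_*}A_1(X_L)\xrightarrow{\pi_*}A_1(\PR^3)=0$, and everything else---in particular the vanishing treated above---is formal.
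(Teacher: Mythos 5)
Your handling of the second assertion is correct and in fact tighter than what one usually sees: identifying $Z_*f^*$ on a point $s\in S$ with $\sigma_*[\pi^{-1}(s)]=\sigma_*\pi^*[s]$ and then using $A_0(\PR^3)=0$ is exactly the content of the paper's remark that $Z_{t*}\circ f_t^*$ ``is $\pi_L^*$'', and your version is cleaner because it does not need to slice first. No complaint there.

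The surjectivity, however, is where the theorem lives, and there your argument has a genuine gap that you yourself flag. The statement you need --- that every algebraically trivial $1$-cycle on $X_L$ is rationally equivalent to a combination of components of degenerate fibres of $\pi$, i.e.\ that the ``new part'' of the motive of a conic bundle is carried by the double cover of the discriminant --- is a theorem for conic bundles over \emph{surfaces} (this is the classical Prym theory behind Clemens--Griffiths and Beauville), but you are invoking it over the three-dimensional base $\PR^3$, where it is not off-the-shelf; and your concrete substitute (cubic surfaces $\pi^{-1}(B)$ over general lines $B\subset\PR^3$, the $27$ lines, the rank computation modulo fibre components) only treats cycles that are already combinations of lines lying in a single such surface, and is then left at ``propagating this across the family \dots is the step that carries all the weight; granting it\dots''. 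That granted step is precisely the theorem. Note also that the $27$-lines computation says nothing about $1$-cycles that are not combinations of lines, so even the reduction to lines is unaccounted for. The paper avoids the missing input by slicing differently: it cuts $X$ by hyperplanes, so that each smooth section $X_t$ is a cubic threefold on which the induced projection from $L$ is a conic bundle over $\PR^2$ with discriminant curve $S_t$ and double cover $T_t\subset T$; there the generation of $A_1(X_t)$ by differences of lines incident to $L$ is the classical threefold statement (with a separate argument for the sections containing $L$, via the blow-up formula), and one then only needs that $A_1(X)$ is generated by the images of the $A_1(X_t)$ and $A_0(T)$ by the $A_0(T_t)$. If you want to salvage your route, the honest options are either to prove the relative decomposition for conic bundles over a threefold base, or to do what the paper does and reduce to the surface-base case by passing to hyperplane sections.
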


\begin{proof}

Consider a hyperplane section $X_t$ of $X$, then $L$ might or might not be contained in $X_t$. If $L$ is not contained in $X_t$, then we have a regular map from $X_t$ to $\PR^2$ (by considering successive projections). Let us consider two lines on $X_t$, then under the projection $\pi_{tL}$ from $L$, they are mapped onto two rational curves on $\PR^2$, by Bezout's theorem these curves intersect and we get a point $z$ in $\PR^2$. Since the inverse image of $z$ is a union of two lines, $z$ belongs to the discriminant curve $S_t$ of the projection.

Now let $L$ belong to $X_t$. Then we have to prove that for pairs of lines of the form $(L,L')$, there exists a point $z$ on the discriminant curve. For that we consider the strict transform of $X_t$ along $L$. Then we have a regular map from $X_{tL}$ to $\PR^2$. Under this blow up we have  decomposition of $A^2(X_{tL})\cong \pi_{tL}^*(A^2(X_t))\oplus j_*(A^1(Z))$, where $Z$ is a $\PR^1$-bundle over $L$. This means that $\pi_{tL}^{-1}(L)$ is a ruled surface, which produces a divisor on $X_t$, supported on $Z$, since $A^1(Z)$ is trivial, this divisor is rationally trivial. Hence the group $A^2(X_{tL})$, and $A^2(X_t)$ is generated by differences of lines of the form $L_1-L_2$, where $L_1,L_2$ are different from $L$. For such differences we have a point $z$ on the discriminant curve $S_t$.  Then this analysis tells us that the composition of $Z_{t*}$ is surjective from $A_0(T_t)$ to $A_1(X_t)$, where $Z_t$ is the Universal line on the product $X_t\times F(X_t)$ and $f_t$ is the $2:1$ map from $T_t\to S_t$. Then if we compose $Z_{t*}$ and $f_t^*$, it will give $\pi_L^*$, so we have $A_0(S_t)$ goes to zero under $Z_{t*}\circ f_t^*$. Now $A_0(S)$ is generated by $A_0(S_t)$, where $S_t$ is a smooth projective curve in $S$, on the other hand $A_1(X)$ is generated by $A_1(X_t)$, where $X_t$ is a smooth hyperplane section, by Bertini's theorem. So we get the result.

\end{proof}
Now we try to understand the kernel of the push-forward from $A_0(T_t)$ to $A_0(T)$ and prove the following:

\begin{theorem}
For a general $t$ the kernel of $A_0(T_t)\to A_0(T)$ is  contained in the kernel of $A_0(T_t)\to A_1(X_t)$.
\end{theorem}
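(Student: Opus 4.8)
The plan is to transfer the statement to the cubic threefolds $X_t$, where $A_1(X_t)$ is the intermediate Jacobian, an honest abelian variety, and then to argue by spreading out rational equivalence along the family of hyperplane sections. The first step is to compare the two universal-line correspondences. Let $Z\subset T\times X$ and $Z_t\subset T_t\times X_t$ be the universal lines; since $\ell\subset X_t$ for every $\ell\in T_t$ one has $Z_t=Z\cap(T_t\times X_t)$, and this intersection has the expected dimension, so base change together with the projection formula for correspondences yields a commutative square
$$
\diagram
A_0(T_t)\ar[dd]_-{}\ar[rr]^-{Z_{t*}} & & A_1(X_t)\ar[dd]^-{}\\ \\
A_0(T)\ar[rr]^-{Z_*} & & A_1(X)
\enddiagram
$$
whose vertical arrows are push-forward along the inclusions $T_t\hra T$ and $X_t\hra X$. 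Here $Z_{t*}$ is by definition the map $A_0(T_t)\to A_1(X_t)$, so $\ker\big(A_0(T_t)\to A_1(X_t)\big)=\ker Z_{t*}$; and if $z\in A_0(T_t)$ dies in $A_0(T)$ then $Z_{t*}z$ dies in $A_1(X)$. Thus the theorem reduces to the following statement $(\ast)$: for a general $t$ the push-forward $\jmath_{t*}\colon A_1(X_t)\to A_1(X)$, with $\jmath_t\colon X_t\hra X$, is injective, at least modulo torsion. (By Theorem 2.2 applied to the threefold $X_t$ and the classical conic-bundle description $A_1(X_t)\cong\Prym(T_t/S_t)$, one moreover knows that $\ker Z_{t*}$ is precisely $f_t^*A_0(S_t)$, the image of the invariant part of $A_0(T_t)$ under the covering involution, up to $2$-torsion; as the inclusions commute with that involution, a cycle $z$ dying in $A_0(T)$ automatically has its anti-invariant part dying in $A_0(T)$, and $(\ast)$ is exactly what upgrades this to vanishing.)

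To prove $(\ast)$, suppose $\alpha\in A_1(X_t)$ with $\jmath_{t*}\alpha=0$ in $A_1(X)$. Representing $\alpha$ by a $1$-cycle on $X_t$ and the rational equivalence $\jmath_{t*}\alpha\sim 0$ on $X$ by finitely many surfaces and rational functions, a Bloch--Srinivas-type argument spreads this out over a locally closed subset $B'$ of the base $B$ of hyperplane sections, giving a fibrewise rationally trivial relative $1$-cycle over $B'$ whose fibre over $t$ is $\alpha$; equivalently, $\alpha$ is a value of a section of the relative intermediate Jacobian $\mathcal J\to B'$ lying in the sub-locus $\mathcal K$ whose fibres map to $0$ in $A_1(X)$. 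Since the possible spreads are parametrised by countably many components of relative Hilbert schemes, the $t$ for which $\jmath_{t*}$ has non-torsion kernel form a countable union of constructible subsets of $B$. If one of these dominated $B$ we would obtain, over a finite cover of a dense open, a sub-abelian-scheme $\mathcal A\subseteq\mathcal J$ of positive relative dimension contained in $\mathcal K$, hence a nonzero monodromy-invariant sub-variation of $\{H^3(X_t,\QQ)\}$. But in the Lefschetz pencil of cubic threefolds $X_t$ all of $H^3(X_t,\QQ)$ is vanishing cohomology (because $H^3(X,\QQ)=0$) and the monodromy acts on it irreducibly, so $\mathcal A$ would have to be all of $\mathcal J$; that would force $A_1(X)=\sum_t\jmath_{t*}A_1(X_t)=0$, contradicting that $A_1(X)$ has essential dimension two (Theorem 2.2) and is not dominated by a curve's Jacobian (Schoen). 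Hence $(\ast)$ holds away from a countable union of proper Zariski-closed subsets of $B$, i.e. for a general $t$.

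The main obstacle is this last passage: carrying out the spreading-out of the rational equivalence on the fourfold $X$ and organising the countably many Hilbert-scheme components precisely enough to land in a genuine family of abelian subvarieties to which the monodromy/irreducibility argument applies — and, in parallel, controlling torsion, so that ``$\ker$ is zero modulo torsion'' really delivers the stated inclusion $\ker\big(A_0(T_t)\to A_0(T)\big)\subseteq\ker\big(A_0(T_t)\to A_1(X_t)\big)$. (A Nori-type connectivity theorem would give $(\ast)$ directly, but it does not apply here since the sections $X_t\subset X$ have degree one.)
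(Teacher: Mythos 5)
Your reduction via the commutative square $\jmath_{t*}\circ Z_{t*}=Z_*\circ i_{t*}$ is sound, and your toolbox --- countability of fibres of cycle maps, spreading out over the pencil, irreducibility of the monodromy on the vanishing cohomology $H^3(X_t,\QQ)$, and non-representability of $A_1(X)$ to exclude the ``everything'' case --- is exactly the paper's. But the intermediate statement $(\ast)$ you reduce to is not what that machinery delivers, and this is a genuine gap rather than a technicality. Applied to $\ker\bigl(\jmath_{t*}\colon A_1(X_t)\to A_1(X)\bigr)$, the countability-plus-monodromy argument shows only that, for general $t$, the abelian subvariety of $J(X_t)$ underlying this kernel is trivial, i.e.\ that the kernel is a \emph{countable} subgroup of the five-dimensional intermediate Jacobian $J(X_t)$. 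A countable subgroup need not be torsion (it can contain points of infinite order), so you only learn that $Z_{t*}z$ lies in a countable subgroup for $z\in\ker\bigl(A_0(T_t)\to A_0(T)\bigr)$; this does not give $Z_{t*}z=0$, and your closing paragraph correctly flags this as the unresolved point without closing it.

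The paper never considers $\ker\jmath_{t*}$ at all. It observes instead that $\ker\bigl(A_0(T_t)\to A_0(T)\bigr)$ is a countable union of translates of an abelian subvariety $A_t\subseteq J(T_t)$, spreads $A_t$ over the base, and runs the Picard--Lefschetz/irreducibility argument directly on the subspace $Z_{t*}H^{2d-1}(A_t,\QQ)\subseteq Z_{t*}H^1(T_t,\QQ)=H^3(X_t,\QQ)$, which must therefore be $0$ or everything; the second case is excluded because it would make $A_1(X)$ representable. This yields $Z_{t*}(A_t)=0$ with no injectivity statement about $\jmath_{t*}$ needed. Your route can be repaired by one added observation: $Z_{t*}(A_t)$ is an abelian subvariety of $J(X_t)$ (image of an abelian variety under a homomorphism induced by a correspondence) sitting inside the countable set $\ker\jmath_{t*}$, hence it is zero. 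Without that step your proposal does not reach the conclusion. (For completeness: both your argument and the paper's control only the identity component $A_t$ and say nothing about the countably many translates constituting the full kernel upstairs, so neither literally establishes the stated inclusion for the whole kernel; but that defect is shared, whereas the countable-versus-torsion confusion is specific to your reduction.)
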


\begin{proof}

Consider the following commutative diagram.

$$
  \diagram
   \Sym^g T_t\ar[dd]_-{} \ar[rr]^-{} & & \Sym^g T \ar[dd]^-{} \\ \\
  A_0(T_t) \ar[rr]^-{j_{t*}} & & A_0(T)
  \enddiagram
  $$
Since the fiber of the right vertical map is a countable union of Zariski closed subsets in $\Sym^g T$, and the map from $\Sym^g T_t$ to $A_0(T_t)$ is onto, we have that the kernel of $j_{t*}$ is a countable union of Zariski closed subsets in $J(T_t)$, under the identification $A_0(T_t)\cong J(T_t)$. Since $\ker(j_{t*})$ is a subgroup and we work over uncountable ground fields, we have the kernel is a countable union of shifts of an abelian subvariety $A_t$ inside $J(T_t)$ [for more details please see, \cite{BG}].

Now we prove that this kernel is either countable or all of $J(T_t)$ by using monodromy argument. First of all we notice that $Z_{t*}$ is onto from $A_0(T_t)$ to $A_1(X_t)$, which by the equivalence of Hodge structures and abelian varieties gives rise to a surjective map $Z_{t*}$ from $H^1(T_t,\QQ)$ to $H^3(X_t,\QQ)$. Considering a Lefschtez pencil through $X_t$, we have an action of $\pi_1(\PR^1\setminus \{0_1,\cdots,0_m\},t)$ on $H^3(X_t,\QQ)$, hence on $Z_*(H^1(T_t,\QQ))$. This action is given by the Picard Lefschetz formula and it acts irreducibly on $H^3(X_t,\QQ)$ \cite{Voisin},\cite{Voi}, and hence on the image of $Z_*$.

Now suppose that there exists $t$ such that $A_t$ is neither zero nor all of $J(T_t)$. Then this $t$ corresponds to a morphism $\CC[s]\to \CC$, and using this morphism we can spread $A_t,J(S_t)$ to all over $\Spec(\CC[s])$. This can be done as follows. Suppose that $A_t$ is given by $\CC[x,y,z]/f(x,y,z)$, the we can consider its spread over $\CC[t]$ to be $\Spec(\CC[s][x,y,z])/f(x,y,z)$. Then there exists $\bcA$ and $\bcJ$, which are spreads of $A_t,J(T_t)$ respectively. As soon as we have a spread we can consider the fibration given by them over some Zariski open subset of $\mathbb A^1$. Any such fibration corresponds to a locally constant sheaf and hence action of the fundamental group of $\pi_1(U,t)$ on the stalks of the locally constant sheaf, where $U$ is Zariski open around $t$. The point $t$ belongs to $U$, because the fiber over it is smooth (so when applying Ehressmann's theorem on fibration, we don't throw away $t$). Therefore $\pi_1(U,t)$
acts on $Z_*(H^{2d-1}(A_t,\QQ))$, where $d$ is the dimension of $A_t$. On the other hand $Z_*(H^{2d-1}(A_t,\QQ))$ is embedded in $Z_*(H^1(T_t,\QQ))$ and this embedding is a map of $\pi_1(U,t)$ modules (because it is induced by a regular morphism of algebraic varieties). Therefore $Z_*(H^{2d-1}(A_t,\QQ))$ is $\pi_1(U,t)$ invariant and hence it is equal to $\{0\}$ or $Z_*(H^1(T_t),\QQ)$. So either $A_t$ is contained inside the kernel of $J(T_t)\to A_1(X_t)$ or it maps surjectively onto $A_1(X_t)$. The later cannot happen because then $A_1(X_t)$ maps to zero for a general $t$ into $A_1(X)$, which gives that $A_1(X)$ is representable, which is not true. Hence for a general $t$, we have that  $A_t$ is contained inside the kernel of $J(T_t)\to A_1(X_t)$.
\end{proof}

\begin{remark}
This arguments goes through for any uncountable ground field of characteristic zero, we have to argue by using \'etale fundamental groups instead of usual fundamental groups.
\end{remark}

\section{Prym construction of correspondences}
In this section we are interested in the following problem. Let $\Gamma_1$ be a correspondence on $S_1\times X$, where $S_1$ is a surface and $X$ a cubic fourfold, such that $\Gamma_{1*}$ is an isomorphism from $A_0(S_1)$ to $A_1(X)$. Similarly let $\Gamma_2$ be a correspondence on $S_2\times X$, where $S_2$ is another surface, such that $\Gamma_{2*}$ is an isomorphism. Then can we cook up $\Gamma$, which is a correspondence of degree zero between $S_1$ and $S_2$. So we prove the following theorem:

\begin{theorem}
Let $S_1,S_2$ be smooth projective surfaces and $X$ is a smooth  fourfold. Let $\Gamma_1,\Gamma_2$ are correspondences from $S_1,S_2$ to $X$. Let $\Gamma_i'$ be the extension of $\Gamma_i$ supported on $S_{i\CC(S_i)}\times X_{\CC(S_i)}$. Consider the homomorphism $X_{\CC(S_i)}\to X_{\CC(S_1)\times \CC(S_2)}$ and compose it with $\Gamma_i'$ and denote the composition by $\Gamma_i'$. Suppose that for the generic points $\eta_i$ of $S_i$ we have
$$\Gamma_1'(\eta_1)=\Gamma_2'(\eta_2)\;.$$
Also assume  that $\Gamma_{1*},\Gamma_{2*}$ are both surjective, then there exists a correspondence $R$ from $S_1$ to $S_2$ such that $R$ induces a homomorphism from $A_0(S_1)$ to $A_0(S_2)$, and the kernel of $R$ is torsion if  $\Gamma_{1*}$ is injective.

\end{theorem}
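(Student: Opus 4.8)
The plan is to produce a correspondence $R\subset S_1\times S_2$ of dimension $2$ together with a positive integer $d$ such that
$$\Gamma_{2*}\circ R_*=d\cdot\Gamma_{1*}\colon A_0(S_1)\longrightarrow A_1(X).$$
Granting this, the torsion statement is immediate: if $\Gamma_{1*}$ is injective and $a\in\ker R_*$, then $\Gamma_{1*}(da)=d\,\Gamma_{1*}(a)=\Gamma_{2*}R_*(a)=0$, hence $da=0$ and $\ker R_*\subseteq A_0(S_1)[d]$ is torsion. I work with Chow groups over various base fields, using throughout that a correspondence carries algebraically trivial cycles to algebraically trivial cycles; write $k_1=\CC(S_1)$ and $L=\CC(S_1\times S_2)$, so that $L=k_1(S_2)$ is the function field of the $k_1$-variety $S_{2,k_1}$. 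In this language the hypothesis says that the $1$-cycle $\Gamma_{1*}^{L}(\eta_1)\in\CH_1(X_L)$ (which is defined already over $k_1$, being the base change of $\Gamma_{1*}^{k_1}(\eta_1)$) equals $\Gamma_{2*}^{L}(\eta_2)$, where $\eta_2$ is the generic point of $S_{2,k_1}$.

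The first step is to spread this equality out over $S_{2,k_1}$. The class $\Gamma_{2*}^{L}(\eta_2)$ is the restriction to the generic fibre of the family of $1$-cycles cut out on $X_{k_1}$ by $\Gamma_{2,k_1}\subset S_{2,k_1}\times X_{k_1}$, while $\Gamma_{1*}^{L}(\eta_1)$ is the restriction of the constant family with fibre $\Gamma_{1*}^{k_1}(\eta_1)$; by the localization sequence (equivalently, the colimit description of $\CH_1(X_L)$ as $\varinjlim_V \CH_\bullet(X_{k_1}\times V)$ over dense open $V\subseteq S_{2,k_1}$) these two families already coincide as cycle classes over some dense open $V$. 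Choose a closed point $v\in V$ and set $d=[k(v):k_1]<\infty$. Restricting the two families to the fibre over $v$ gives
$$\Gamma_{2*}^{k(v)}\big([v]\big)=\Gamma_{1*}^{k(v)}(\eta_1)\qquad\text{in }\CH_1(X_{k(v)}),$$
and pushing forward along the finite degree-$d$ extension $\Spec k(v)\to\Spec k_1$ — using that correspondence action commutes with proper push-forward of the base and that push-forward of a cycle extended from $k_1$ is multiplication by $d$ — yields
$$\Gamma_{2*}^{k_1}\big([v]\big)=d\cdot\Gamma_{1*}^{k_1}(\eta_1)\qquad\text{in }\CH_1(X_{k_1}),$$
where now $[v]$ denotes the closed point $v$ regarded as a zero-cycle of degree $d$ on $S_{2,k_1}$.

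Next I manufacture $R$ from $[v]$. Using the colimit description $\CH_0(S_{2,k_1})=\varinjlim_W \CH_2(S_2\times W)$ over dense open $W\subseteq S_1$, together with the surjectivity of restriction to an open subset on Chow groups, the zero-cycle $[v]$ is the restriction to the generic fibre over $S_1$ of a cycle $R\in\CH_2(S_1\times S_2)$; this $R$ has the dimension of a correspondence inducing $R_*\colon A_0(S_1)\to A_0(S_2)$, and by construction $R_*^{k_1}(\eta_1)=[v]$. Composing with $\Gamma_2$ gives $(\Gamma_2\circ R)_*^{k_1}(\eta_1)=\Gamma_{2*}^{k_1}([v])=d\cdot\Gamma_{1*}^{k_1}(\eta_1)$, so the correspondences $\Gamma_2\circ R$ and $d\,\Gamma_1$ in $\Corr(S_1,X)$ have the same restriction to the generic fibre over $S_1$. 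Their difference therefore lies in the image of $\CH_\bullet(Z\times X)\to\CH_\bullet(S_1\times X)$ for some proper closed $Z\subsetneq S_1$, and a correspondence supported over such a $Z$ annihilates $A_0(S_1)$, since by the moving lemma every class in $A_0(S_1)$ is represented by a zero-cycle disjoint from $Z$. Hence $\Gamma_{2*}\circ R_*=d\cdot\Gamma_{1*}$ on $A_0(S_1)$, and the theorem follows by the remark of the first paragraph.

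The main obstacle is the spreading step: one must descend the equality of $1$-cycles from the generic point of $S_2$ to an honest closed point, necessarily over the non-algebraically-closed field $k_1=\CC(S_1)$, and this is exactly what forces the multiplicity $d=[k(v):k_1]$; in general $v$ cannot be taken $k_1$-rational (that would amount to a suitably placed rational section of $S_1\times S_2\to S_1$), so one genuinely obtains only $\Gamma_{2*}R_*=d\cdot\Gamma_{1*}$ and hence a torsion — not necessarily trivial — kernel. The surjectivity hypotheses on $\Gamma_{1*},\Gamma_{2*}$ are not needed for this particular conclusion beyond making the generic-fibre assumption natural; only the compatibility $\Gamma_1'(\eta_1)=\Gamma_2'(\eta_2)$ and the injectivity of $\Gamma_{1*}$ are used. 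One should also verify that the colimit/localization formalism and the statement ``supported over a proper closed subset $\Rightarrow$ acts as zero'' are applied to the subgroups $A_\bullet$ rather than to full Chow groups, but this is routine.
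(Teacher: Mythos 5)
Your proposal is correct, and it reaches the same final identity as the paper --- $\Gamma_{2*}\circ R_*=(\text{positive integer})\cdot\Gamma_{1*}$ on $A_0(S_1)$, whence the torsion kernel --- but by a genuinely different route. The paper defines $R$ set-theoretically as the locus of pairs $(s_1,s_2)$ with $\Gamma_{1*}(s_1)=\Gamma_{2*}(s_2)$ in $A_1(X)$, invokes the Roitman--Mumford theorem that fibres of the cycle map are countable unions of Zariski closed subsets to see that $R$ is such a union, extracts an irreducible component dominating both factors, and then runs a dimension count: the case $\dim R_1=4$ is excluded because it would force $A_0(S_2)=0$ hence $A_1(X)=0$, and the case $\dim R_1=3$ because the curve fibres would make $A_0(S_1)$ weakly representable; what survives is a component finite of some degree $r$ over $S_1$, giving $\Gamma_{2*}R_*=r\Gamma_{1*}$. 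Note that this dimension analysis silently uses the non-representability of $A_1(X)$ and $A_0(S_i)$, which are not hypotheses of the theorem as stated. Your argument is instead a Bloch--Srinivas-style spreading: you descend the generic-point identity to a closed point $v$ of degree $d$ over $\CC(S_1)$ via the localization/colimit description, push forward to get $\Gamma_{2*}^{k_1}([v])=d\,\Gamma_{1*}^{k_1}(\eta_1)$, take $R$ to be the closure of $v$ in $S_1\times S_2$, and use the principle that a correspondence supported over a proper closed subset of $S_1$ kills $A_0(S_1)$. This buys you independence from countability and monodromy arguments and from the unstated representability hypotheses, and as you observe it does not even need surjectivity of the $\Gamma_{i*}$; what it gives up is the paper's more concrete geometric description of $R$ as a multivalued graph finite over $S_1$. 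The only points to keep an eye on are the ones you already flag: the Gysin restriction to the fibre over $v$ and the moving of zero-cycles off a proper closed subset, both of which are standard for smooth projective surfaces in characteristic zero.
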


\begin{proof}

Suppose that we are given that for the generic points of $S_1,S_2$, the correspondences coincide. Precisely it means the following. Let $\eta_1,\eta_2$ be two points of transcendence degree $2$ on $S_1,S_2$ respectively and $\Gamma'_1,\Gamma'_2$ denote the extension of $\Gamma_1,\Gamma_2$ over $\Spec(\CC(S_1)),\Spec(\CC(S_2))$, respectively, that is we have $\Gamma_i'$ supported on $S_{i\CC(S_i)}\times X_{\CC(S_i)}$. We further consider the morphism $X_{\CC(S_1)}\to X_{\CC(S_1)\times \CC(S_2)}$ and $X_{\CC(S_2)}\to X_{\CC(S_1)\times \CC(S_2)}$, consider the homomorphism induced by them at the level of Chow groups and demand that the image of $\Gamma'_1(\eta_1)$ is rationally equivalent to the image of $\Gamma'_2(\eta_2)$ on $ X_{\CC(S_1)\times \CC(S_2)}$. Now we know that Chow group of zero cycles of $X_{\CC(S_1)\times \CC(S_2)}$ is isomorphic to the colimit of Chow groups of zero cycles
$$\varinjlim \CH_0(X\times_{\Spec(k)}{U})$$
where $U$ is Zariski open in $S_1\times S_2$. So the above condition  means that there exists closed points $(s_1,s_2)$, in $U$ inside $S_1\times S_2$ such that
$$\Gamma_{1*}(s_1)=\Gamma_{2*}(s_2)\;.$$
Let us give some more details on it. We have $\CH_0(X\times_{\Spec(k)} U)$, is dominated by $\CH_0(X\times _{\Spec(k)}(U_1\times U_2))$, where $U_1,U_2$ are open in $S_1,S_2$, and since $\Gamma_{1*},\Gamma_{2*}$ are surjective when restricted to $U_1,U_2$. So we get the homomorphisms $\Gamma_{i*}$ from $\CH_0(U_i)$, to $\CH_0(X\times_{\Spec(k)} U_i)$, that will further surject onto $\CH_0(X\times _{\Spec(k)}U)$. Therefore the image of $\Gamma'_{i*}(\eta_i)$ for $i=1,2$ coincide means that there exists $z_1,z_2$ supported on $U_1,U_2$ such that $\Gamma_{1*}(z_1)$
is rationally equivalent to $\Gamma_{2*}(z_2)$. Since $\eta_i$'s are generic points of $S_i$, and we have that $\CH_0(\eta_i)=\varinjlim \CH_0(U_i)$, where $U_i$ is open in $S_i$, we have that $\eta_i$ is rationally equivalent to $s_i$, for some closed point $s_i$ on $U_i$. This actually shows that for any closed point $s_1$ in $U_1$, there exists $s_2$ in $U_2$ such that $\Gamma_{1*}(s_1)$ is rationally equivalent to $\Gamma_{2*}(s_2)$ and vice versa.

Then consider the set $R$ inside $S_{1}\times S_{2}$ given by the pairs $(s_1,s_2)$ such that $\Gamma_{1*}(s_1)=\Gamma_{2*}(s_2)$ in $A_1(X)$, by the above this set $R$ is non-empty. Assuming that $\Gamma_1,\Gamma_2$ are relative cycles in the sense of Suslin-Voevodsky \cite{SV}, we get a morphism from $S_{1}\times S_{2}\to C^d_1(X)\times C^d_1(X)$. Using this and the fact from \cite{R},\cite{M}, that the fibers of the map from $ C^d_1(X)\times C^d_1(X)$ to $A_1(X)$ are countable union of Zariski closed subsets of $ C^d_1(X)\times C^d_1(X)$, we get that $R$ is a countable union of Zariski closed subsets in $S_1\times S_2$. Since it maps surjectively onto $S_{1},S_{2}$, there exists a Zariski closed $R_1,R_2$, which map surjectively onto $S_{1},S_{2}$ respectively. Therefore $R_1,R_2$ are of dimension atleast $2$. Suppose that dimension of $R_1$ is $4$, then a general fiber of the projection from $R_1$ to $S_{1}$ is $2$, which gives us that $A_0(S_{2})$ is zero, therefore $A_1(X)$ is zero which is not true. So $R$ is of dimension $3$ or $2$, in the case when $R$ is of dimension $3$, in this case the fibers are of dimension $1$. For the generic point $\eta_1$ of $S_1$, the generic fiber $R_{1\eta_1}$ is a smooth projective curve, and its Jacobian maps to zero on $X$. Since $\CH_0(R_{1\eta_1})$ is the colimit of $\CH_0(R_{1U})$, where $U$ is Zariski open in $S_1$, we get that there exists a natural map from $\CH_0(R_1)$ to $\CH_0(R_{1\eta_1})$ and also we have $\CH_0(R_1)\to \CH_0(S_1)$, which is surjective by the construction of $R_1$. Since the colimit $\varinjlim \CH_0(R_{1U})$ maps onto the colimit $\varinjlim \CH_0(U)$, which is $\CH_0(\eta)$ and that again naturally maps to $\CH_0(S_1)$, so that $\CH_0(R_1)\to \CH_0(S)$ factors through $\CH_0(R_{1\eta_1})$ and hence $A_0(R_{1\eta_1})$ maps surjectively onto $A_0(S_1)$, so $A_0(S_1)$ becomes weakly representable, which is not true as it maps surjectively onto $A_1(X)$, which is non-representable. Hence we have a correspondence $R$ mapping finitely onto $S_1$, such that $R_*$ maps $A_0(S_1)$ to $A_0(S_2)$, by the following formula,
$$R_*(s_1)=\sum_i s_{1i}$$
where $\Gamma_{1*}(s_1)=\Gamma_{2*}(s_{1i})$ and we have that
$$\Gamma_{2*}R_*(s_1)=r\Gamma_{1*}(s_1)$$
where $r$ is the degree of the map from $R$ to $S_1$. In particular $R_*$ has torsion kernel.
So we proved the following theorem.

\end{proof}

\section{Kernel of a correspondence at the level of zero cycles on smooth projective surfaces}
In this section we are interested in the following problem, that let $R$ be a correspondence on $S_1\times S_2$, giving a homomorphism from $A_0(S_1)$ to $A_0(S_2)$. Can that  kernel of $R_*$ be the $A_0$ of a surface $S$. Precisely that means, can there exists a correspondence $\Gamma$ on $S\times S_1$, such that $\Gamma_*$ is an isomorphism onto the $\ker(R_*)$.  So we prove the following theorem :

\begin{theorem}
Suppose that $S_1$ has irregularity zero and the albanese map is not an isomorphism for both $S_1,S_2$ and supposing that $R_*$ is onto and not injective. Then there does not exist a smooth projective surface $S$ and a correspondence $\Gamma$ on $S\times S_1$ such that image of $\Gamma_*$ is equal to kernel of $R_*$ and $\Gamma_*$ isomorphism.
\end{theorem}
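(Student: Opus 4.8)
The plan is to argue by contradiction. Suppose there exist a smooth projective surface $S$ and a correspondence $\Gamma$ on $S\times S_1$ with $\Gamma_*\colon A_0(S)\to A_0(S_1)$ injective and with image exactly $\ker(R_*)$. Since $R_*$ is onto, this produces an exact sequence
\[
0\longrightarrow A_0(S)\xrightarrow{\ \Gamma_*\ }A_0(S_1)\xrightarrow{\ R_*\ }A_0(S_2)\longrightarrow 0 .
\]
First I would exploit the hypothesis $q(S_1)=0$: then $\Alb(S_1)=0$, so by Roitman's theorem $A_0(S_1)$ is torsion free, and since $A_0$ of any smooth projective surface is divisible (it is a sum of images of Jacobians of curves), $A_0(S_1)$ is a $\QQ$-vector space. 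Hence $A_0(S)$, being divisible and a subgroup of a torsion free group, is a $\QQ$-subspace, and $A_0(S_2)$ is a quotient $\QQ$-vector space, in particular torsion free; Roitman's theorem then forces $\Alb(S_2)=0$, and the same argument gives $\Alb(S)=0$. Thus all three surfaces have irregularity zero, so $A_0$ coincides with the full Albanese kernel $T(\,\cdot\,)$ in each case; the hypothesis that neither Albanese map is an isomorphism then says precisely $T(S_1)\neq0$ and $T(S_2)\neq0$, and since $R_*$ is not injective we also have $A_0(S)=\ker(R_*)\neq0$. Recall here that $T(\Sigma)\neq0$ precludes $A_0(\Sigma)$ from being weakly representable, since a surjection from the Jacobian of a curve onto $A_0(\Sigma)$ would make the Albanese map of $\Sigma$ an isomorphism.

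Next I would transport the exact sequence to transcendental cohomology. By the Bloch--Srinivas method, a correspondence between smooth projective surfaces that is surjective on $A_0$ is surjective on $H^2_{\mathrm{tr}}$, and one that acts trivially on $A_0$ acts trivially on $H^2_{\mathrm{tr}}$ (spread the graph over the generic point of the source and use that the normalization of a curve has no transcendental $H^2$). Applying this to $R$ and to $R\circ\Gamma$ shows that $R_*\colon H^2_{\mathrm{tr}}(S_1)\to H^2_{\mathrm{tr}}(S_2)$ is surjective and that $\Gamma_*$ carries $H^2_{\mathrm{tr}}(S)$ into $\ker(R_*)$; an infinitesimal (Mumford-type) argument turns injectivity of $\Gamma_*$ on $A_0(S)$ into injectivity of $\Gamma_*$ on $H^2_{\mathrm{tr}}(S)$. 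For the reverse inclusion one presents a class in $\ker\bigl(R_*\colon H^2_{\mathrm{tr}}(S_1)\to H^2_{\mathrm{tr}}(S_2)\bigr)$ as the invariant of a non-trivial algebraic family of zero cycles on $S_1$ over a smooth curve and lifts this family to $S$ using the Chow-level equality $\Gamma_*(A_0(S))=\ker(R_*)$, after a finite base change and a correction supported on a divisor (harmless since $q=0$). This would yield a short exact sequence of polarised rational Hodge structures $0\to H^2_{\mathrm{tr}}(S)\to H^2_{\mathrm{tr}}(S_1)\to H^2_{\mathrm{tr}}(S_2)\to0$, which splits by semisimplicity.

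Finally I would extract a contradiction. If $H^2_{\mathrm{tr}}(S)$ were zero, then --- together with $q(S)=0$ --- all of $H^*(S)$ would be algebraic, so $A_0(S)$ would be weakly representable and hence, by the torsion-freeness and divisibility obtained above, zero, contradicting $A_0(S)=\ker(R_*)\neq0$; so $H^2_{\mathrm{tr}}(S)\neq0$ and both summands of the splitting are non-zero. It then remains to convert this splitting of Hodge structures into a splitting of the transcendental Chow groups: reading $\Gamma$ and $R$ over $\CC(S_1)$ (of transcendence degree $\dim S_1$) to decompose the generic zero cycle of $S_1$ into a part supported on $S$ and a part supported on $S_2$, and using $q(S_1)=0$ to promote this to a Chow--Künneth-type decomposition of the diagonal of $S_1$, one would exhibit $A_0(S_2)$ as a direct summand of $A_0(S_1)$ cut out by a correspondence from a surface, and comparing with the Prym-type construction of Section~3 this would contradict the non-representability of $A_0(S_1)$ forced by its Albanese map not being an isomorphism. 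The step I expect to be the main obstacle is exactly this last one, the passage from a splitting of polarised Hodge structures to a splitting of Chow groups of zero cycles: this is a statement of Bloch-conjecture type for $S_1$, and making it unconditional will probably require either an extra hypothesis on the surfaces or a monodromy argument in the spirit of the one used earlier in Section~\ref{section1}, spreading $S$, $S_1$, $S_2$ and $R$ in families and invoking Picard--Lefschetz.
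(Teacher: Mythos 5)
There is a genuine gap here, and you have in fact named it yourself: your argument terminates in a splitting of polarised Hodge structures $H^2_{\mathrm{tr}}(S_1)\cong H^2_{\mathrm{tr}}(S)\oplus H^2_{\mathrm{tr}}(S_2)$, which is not by itself contradictory, and the step that would convert it back into a statement about zero cycles --- and hence into an actual contradiction --- is of Bloch-conjecture type and is left open. A proof that stops at the level of $H^2_{\mathrm{tr}}$ cannot establish this theorem, because the statement is about Chow groups and is strictly stronger than its cohomological shadow. The same conjectural input reappears, unflagged, in your final paragraph: from $H^2_{\mathrm{tr}}(S)=0$ and $q(S)=0$ you conclude that $A_0(S)$ is weakly representable, which is precisely Bloch's conjecture for surfaces with $p_g=0$. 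There are also weaker links earlier in the chain: the Bloch--Srinivas mechanism needs the vanishing (resp.\ surjectivity) of the correspondence on zero cycles after base change to a universal domain, whereas your hypotheses ($\mathrm{im}(\Gamma_*)=\ker(R_*)$, $R_*$ onto) are only given over $\CC$; and the ``reverse inclusion'' step, lifting a class in $\ker\bigl(R_*\colon H^2_{\mathrm{tr}}(S_1)\to H^2_{\mathrm{tr}}(S_2)\bigr)$ to a family of zero cycles coming from $S$, is asserted rather than proved.

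The paper's proof stays entirely at the level of Chow groups and closes by a countability argument, which is exactly the device you defer to in your last sentence. One restricts $\Gamma_*$ to the Jacobian $J(S_t)$ of a general hyperplane section of $S$; since the image is finite dimensional it factors through $J(S_{1s})$ for a suitable hyperplane section of $S_1$, giving a homomorphism $\Gamma_{s,t*}\colon J(S_t)\to J(S_{1s})$ compatible with $\Gamma_*\colon A_0(S)\to A_0(S_1)$. Because $A_0(S)$ is not representable, the kernel of $J(S_t)\to A_0(S)$ is countable for general $t$ by the monodromy argument of Section~\ref{section1}, and injectivity of $\Gamma_*$ then forces $\ker(\Gamma_{s,t*})$ to be countable, so the image of $J(S_t)$ in $J(S_{1s})$ is uncountable. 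On the other hand $\mathrm{im}(\Gamma_*)\subset\ker(R_*)$ forces that image to lie in the kernel of $J(S_{1s})\to A_0(S_2)$, which is countable because $A_0(S_2)$ is not representable. An uncountable set inside a countable one is the contradiction. So the monodromy/countability argument is not an optional strengthening of your approach; it is the engine of the proof, and it is what lets the paper avoid any descent from Hodge structures back to Chow groups.
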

\begin{proof}
With our assumption we get the following things, that is the Jacobians of the hyperplane sections of $S_1$, generate $A_0(S_1)$, hence their images under $R_*$, generate $A_0(S_2)$. Since $A_0(S_1),A_0(S_2)$ are not weakly representable (that is not isomorphic to the albanese), the kernels of the corresponding homomorphisms from Jacobians of the hyperplane sections of $S_1$ to $A_0(S_1),A_0(S_2)$ are countable (this is due to the monodromy argument as in section \ref{section1}.

Let us embedd $S,S_1$ into some projective space. Let $S_t$ be a general smooth hyperplane section of $S$. We have $\Gamma$ supported on $S\times S_1$, of dimension $2$. Consider  the composition $\Gamma_*j_{t*}$ from $J(C_t)$ to $A_0(S_1)$. Since the image of this group is finite dimensional, there exists a hyperplane section of $S_1$ say $S_{1s}$ (we may have to choose a higher degree embedding of $S_1$ into a projective space) such  that
the image of $J(C_t)$ under $\Gamma_*j_{t*}$ is contained in $J(S_{1s})$. Hence by Mumford-Roitman type argument there exists a correspondence $\Gamma_{s,t}$ on $J(C_t)\times J(C_s)$ such that  we have
$$\Gamma_{s,t*}:J(S_{\eta})\to J(S_{1\eta})\;.$$
So for a general closed point $t$ and by the divisibility property of the group of algebraically trivial cycles modulo rational equivalence, we have the following commutative diagram.
$$
  \diagram
    J(S_{t})\ar[dd]_-{} \ar[rr]^-{\Gamma_{s,t*}} & & J(S_{1s}) \ar[dd]^-{} \\ \\
  A_0(S) \ar[rr]^-{} & & A_0(S_1)
  \enddiagram
  $$

Since $A_0(S)$ is not isomorphic to albanese of $S$, we have that the kernel of the left vertical homomorphism is countable for a general $t$. Since $\Gamma_*$ is injective so $\Gamma_{s,t*}$ has countable kernel. Now $J(S_t)$ goes to zero under $R_*$, so it goes to zero under $R_{t*}$. Now the kernel of $\Gamma_{s,t*},R_{t*}$ are countable therefore image of $\Gamma_{s,t*}$ is contained in the kernel of $R_*$, which is an uncountable set, this is a contradiction. So we cannot have $ker(R_*)$ isomorphic to $A_0(S)$.

\end{proof}

\section{The application of the above result for the study of one cycles on a cubic fourfold}

Let $X$ be a smooth cubic fourfold as before. Let $S$ be the discriminant surface of the projection from a line $l$ from $X$ to $\PR^3$. Let $\wt{S}$
be the double cover of $S$ inside the Fano variety of lines $F(X)$. Then as in the \ref{section1}, we have proven that the natural homomorphism from $A_0(\wt{S})$ to $A_1(X)$ is surjective. Also let $\pi$ be the $2:1$ map from $\wt{S}$ to $S$. Then we have a pull-back at the level of zero cycles
$$\pi^*:A_0(S)\to A_0(\wt{S})$$
then we have that
$$Z_*\pi^*(t-s)=Z_*(\pi^*(t)-\pi^*(s))=Z_*(l_s^1+l_s^2-l_t^1-l_t^2)=f^*(s-t)=0$$
where $f$ is the regular map obtained by blowing up the center of the projection from $X$ to $\PR^3$. Since $s-t$ is rationally equivalent to zero on $\PR^3$ we have that $f^*(s-t)$ is rationally equivalent to zero. So the kernel of $Z_*$ contains $\pi^*(A_0(S))$. Now we prove that the kernel of $Z_*$ is exactly $\pi^*(A_0(S))$.

\begin{theorem}
The group $A_1(X)$ is isomorphic to the  $A_0(\wt{S})/\pi^*( A_0(S))$.
\end{theorem}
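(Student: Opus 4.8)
The plan is to combine the surjectivity established above with the classical Prym description of cubic threefolds, globalising it by the Bertini and monodromy techniques of Section~\ref{section1}. We have just seen that $Z_*\colon A_0(\wt S)\to A_1(X)$ is onto and that $\pi^*A_0(S)\subseteq\ker Z_*$, so $Z_*$ induces a surjection $\overline{Z_*}\colon A_0(\wt S)/\pi^*A_0(S)\to A_1(X)$. Hence the whole statement is equivalent to the injectivity of $\overline{Z_*}$: if $\alpha\in A_0(\wt S)$ satisfies $Z_*\alpha=0$ in $A_1(X)$, then $\alpha\in\pi^*A_0(S)$.

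First I would pass to hyperplane sections in the framework of Section~\ref{section1}. A hyperplane $H\supseteq L$ in $\PR^5$ cuts out a smooth cubic threefold $X_t=X\cap H$ and projects to a hyperplane $\PR^2\subset\PR^3$; the conic bundle $X_L\to\PR^3$ restricts over this $\PR^2$ to the conic bundle $X_{tL}\to\PR^2$ obtained by projecting $X_t$ from $L$, with discriminant curve $S_t=S\cap\PR^2$ and associated double cover $\wt{S_t}=\pi^{-1}(S_t)\subset\wt S$. By Bertini the groups $A_0(\wt S)$, $A_1(X)$, $A_0(S)$ are generated by the images of $A_0(\wt{S_t})$, $A_1(X_t)$, $A_0(S_t)$ as $t$ varies; all the pushforwards commute with $Z$ and with $\pi$, and $\pi^*A_0(S)=\bigcup_t j_{t*}(\pi_t^*A_0(S_t))$ by the projection formula. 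So the problem is reduced to controlling, for general $t$, those $\beta\in A_0(\wt{S_t})$ for which $Z_{t*}\beta$ dies in $A_1(X)$, i.e.\ $Z_{t*}\beta\in\ker(A_1(X_t)\to A_1(X))$, and then to seeing how such relations assemble over the varying $t$.

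The input on the threefold side is classical: since $X_{tL}\to\PR^2$ is the standard conic bundle model of the cubic threefold $X_t$, the theory of Clemens--Griffiths \cite{CG} identifies the intermediate Jacobian $J(X_t)$ with the Prym of $\wt{S_t}/S_t$; together with the Abel--Jacobi isomorphism $A_1(X_t)\xrightarrow{\sim}J(X_t)$ and the identifications $A_0(\wt{S_t})=J(\wt{S_t})$, $A_0(S_t)=J(S_t)$, this yields $\ker(Z_{t*}\colon A_0(\wt{S_t})\to A_1(X_t))=\pi_t^*A_0(S_t)$, i.e.\ $A_1(X_t)\cong A_0(\wt{S_t})/\pi_t^*A_0(S_t)$. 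Feeding this into the theorem of Section~\ref{section1} that $\ker(A_0(\wt{S_t})\to A_0(\wt S))\subseteq\ker Z_{t*}$ for general $t$, one gets that the image of $A_0(\wt{S_t})$ inside $A_0(\wt S)/\pi^*A_0(S)$ is a quotient of $A_1(X_t)$ compatibly with $\overline{Z_*}$, and that $\ker\overline{Z_*}$ is, on each such piece, governed by $\ker(A_1(X_t)\to A_1(X))$.

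The hard part, which I would flag as the main obstacle, is to conclude $\ker\overline{Z_*}=0$ from this: one cannot simply restrict the relation $Z_*\alpha=0$ to a single section, because $A_1(X_t)\to A_1(X)$ is very far from injective — all odd-degree cohomology of a cubic fourfold vanishes, so the Abel--Jacobi invariant of a $1$-cycle on $X$ carries no information and $\ker(A_1(X_t)\to A_1(X))$ is not directly controllable. I would handle this by the spreading/monodromy argument of Section~\ref{section1}: fixing a Lefschetz pencil of hyperplane sections through $L$, the contribution of a general fibre to $\ker\overline{Z_*}$, viewed inside $A_1(X_t)\cong A_0(\wt{S_t})/\pi_t^*A_0(S_t)$, is a countable union of translates of an abelian subvariety $B_t\subseteq A_1(X_t)$ stable under the Picard--Lefschetz monodromy; by irreducibility of the monodromy representation on $H^3(X_t,\QQ)$ (hence on $A_1(X_t)$ up to isogeny), $B_t$ is either $0$ or all of $A_1(X_t)$, and the second alternative would force $A_1(X_t)\to A_1(X)$ to be zero for general $t$ and hence, exactly as in Section~\ref{section1}, make $A_1(X)$ representable, contradicting Schoen \cite{Sc}. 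Thus $B_t=0$, the kernel is countable, and a final spreading argument over the uncountable base $\CC$, identical to the one in Section~\ref{section1} (a monodromy-invariant countable subgroup of the divisible group $A_0(\wt S)/\pi^*A_0(S)$ must be trivial), gives $\ker\overline{Z_*}=0$, which is the assertion. The essential point throughout is the contrast between the vanishing of Abel--Jacobi on $1$-cycles of the fourfold and its validity on the threefold sections, which is precisely why the monodromy input of Section~\ref{section1} is indispensable here; one could alternatively hope to read off the isomorphism in one stroke from a Chow--motivic decomposition of the conic bundle $X_L\to\PR^3$, but the route above stays within the techniques already developed in this paper.
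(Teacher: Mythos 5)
Your proposal runs on the same fuel as the paper's proof --- Bertini reduction to hyperplane sections, Beauville's Prym identification for the conic--bundle threefold sections, Picard--Lefschetz irreducibility of the monodromy on $H^3(X_t,\QQ)$ combined with Schoen's non-representability of $A_1(X)$ to resolve the resulting dichotomy, and countability of kernels over the uncountable field $\CC$ --- but it reorganizes them. The paper applies the monodromy dichotomy one step upstream: it shows that the abelian variety $A_i\subseteq J(\wt{S_{t_i}})$ underlying $\ker\bigl(\oplus_i A_0(\wt{S_{t_i}})\to A_1(X)\bigr)$ must already lie in $\ker\bigl(J(\wt{S_{t_i}})\to A_1(X_{t_i})\bigr)$, and only afterwards identifies that kernel with $\pi^*J(S_{t_i})$; since its sections $X_t$ are not assumed to contain the line, Beauville's theorem applies literally only on the Zariski-closed locus of sections containing $l$, and the paper inserts a constancy-of-local-systems argument to transport the identification $\ker=\pi^*J(S_t)$ to a general $t$. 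You instead restrict from the outset to hyperplanes containing $L$, invoke Beauville on every such section, and push the monodromy dichotomy downstream onto $\ker(A_1(X_t)\to A_1(X))$. That is a legitimate reshuffling and it lets you skip the constancy step; the price is that every Bertini and Lefschetz-pencil statement must now be verified inside the three-dimensional sub-linear system of hyperplanes through $L$ --- in particular that a pencil constrained by $L\subset H$ is still a Lefschetz pencil with irreducible monodromy on $H^3(X_t,\QQ)$, which the standard theorem (stated for a generic pencil in the full dual space) does not give for free and which you do not check.

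The one step I would call a genuine gap is the last one. Knowing that the abelian variety $B_t$ vanishes only tells you that the relevant kernel is \emph{countable}; to conclude $\ker\overline{Z_*}=0$ you assert that a countable divisible subgroup of $A_0(\wt{S})/\pi^*(A_0(S))$ must be trivial, and that inference is false as a statement about abelian groups ($\QQ/\ZZ$ is countable and divisible). Some additional input (e.g.\ control of torsion, or an argument disposing of the countably many residual cosets one by one) is needed here. In fairness, the paper's own proof elides the very same point: it shows that the identity component $A$ of the kernel lies in $\oplus_i\pi_i^*(J(S_{t_i}))$ and silently discards the countable union of translates of $A$ making up the rest of the kernel. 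So your proposal reproduces the paper's argument at its weakest joint as well, but states that step in a form that is visibly incorrect rather than merely omitted.
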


\begin{proof}
So let $a$ belongs to the kernel of $Z_*$. Then $a$ is supported $\oplus A_0(\wt{S_t})$ for finitely many $t$, so we have that it is of the form $\sum_i Z_{t_i*}(a_{t_i})$, where $Z_{t*}$ is the homomorphism from $A_0(\wt{S_t})$ to $A_1(X)$. So to understand the kernel of $Z_*$, we have to understand the kernel of $\oplus_i Z_{t_i*}$ from
$$\oplus_{i}A_0(\wt{S_{t_i}})\to A_1(X)\;.$$
So let us consider the following commutative diagram.
$$
  \diagram
   \prod_i\Sym^g \wt{S_{t_i}}\ar[dd]_-{} \ar[rr]^-{} & & \bcC^d_1(X) \ar[dd]^-{} \\ \\
  \oplus_i A_0(\wt{S_{t_i}}) \ar[rr]^-{Z_{t_{i}*}} & & A_1(X)
  \enddiagram
  $$
Since the fiber of the right vertical morphism is a countable union of Zariski closed subsets in the Chow scheme of one cycles on the cubic, we have that the kernel of the lower horizontal homomorphism is a countable union of Zariski closed subsets in the direct sum
$$\oplus_i A_0(\wt{S_{t_i}})\;.$$
By using the uncountability of the ground field we get that the kernel is a countable union of shifts of an abelian variety $A$ inside the product
$\prod_i J(\wt{S_{t_i}})$. Since the abelian variety lies inside the product of Jacobians, it will follow that it is isogenous to the product of abelian varieties $A_i$, where each $A_i$ is embedded into $J(\wt{S_{t_i}})$. This fact comes from the equivalence between polarized Hodge structures and abelian varieties. The abelian variety $A$ corresponds to a Hodge structure in the direct sum $\oplus_i H^1(\wt{S_{t_i}},\QQ)$, so it decomposes into a sum $\oplus_i H_i$, where $H_i$ is a Hodge substructure in $H^1(\wt{S_{t_i}},\QQ)$. Now let $A$ be a proper non-trivial subvariety inside the product. Then there exists atleast one $A_i$, which is proper and non-trivial in $J(\wt{S_{t_i}})$. Let $D$ be a Lefschetz pencil through $X_{t_i}$. Then we have an action of $\pi_1(D\setminus 0_1,\cdots,0_m,t_i)$ on $H^3(X_{t_i},\QQ)$. So we have the induced action of the above fundamental group on $Z_{{t_i}*}(H^1(\wt{S_{t_i}},\QQ))$. So the image of  $A_i$ under $Z_{t_i*}$ gives rise to a Hodge substructure $H_i$ in $Z_{t_i*}(H^1(\wt{S_{t_i}},\QQ))$.

Now extend the scalars from $\CC$ to $\CC(t)$. Consider the abelian variety $A_{i\CC(t)}$ (by abuse of notation we mean the image of $A_i$ under $Z_{t_i*}$) over the function field $\CC(t)$. Let $L$ be a finite extension of $\CC(t)$ inside the algebraic closure $\bar{\CC(t)}$, such that $A_{i\CC(t)}, J(\wt{S_{t_i}})_{\CC(t)}$ are defined over $L$. Let $D'$ be a smooth projective curve which maps finitely onto $D$ and $\CC(D')=L$. Then we spread the abelian varieties $A_{i\CC(t)}, J(\wt{S_{t_i}})_{\CC(t)}$ over a Zariski open $U'$ in $D'$. Let us denote this spreads by $\bcA,\bcJ$ respectively. Throwing out some more points from $U'$ we get that the map from $\bcA,\bcJ$ to $U'$ is a proper submersion. So by the Ehressmann's fibration theorem we get a locally constant sheaf on which $\pi_1(U',t')$ acts. The locally constant sheaf has its stalk as $H^{2d-1}(A_{t_i},\QQ),H^1(\wt{S_{t_i}},\QQ)$ and $t'$ is a closed point of $U'$ lying over $t_i$. Now $\pi_(U',t')$ is a finite index subgroup in $\pi_1(U,t_i)$, where $U=D\setminus 0_1,\cdots, 0_m$. Now we prove that $\pi_1(U,t_i)$ acts  on $H^{2d-1}(A_{t_i},\QQ)$, that is this subspace is $\pi_1(U,t_i)$ stable. By the Picard Lefschetz formula we have that the action of $\pi_1(U,t_i)$ acting on $Z_{t_i*}H^1(\wt{S_{t_i}},\QQ)$ given by
$$\gamma. Z_{t_i*}(\alpha)=Z_{t_{i}*}(\alpha)\pm \langle Z_{t_{i}*}(\alpha),\delta_{\gamma} \rangle\delta_{\gamma}$$
where $Z_{t_i*}$ is the homomorphism of Hodge structures from $H^1(\wt{S_{t_i}},\QQ)$ to $H^3(X_{t_i},\QQ)$ and it is surjective, $\gamma$ is a generator of the fundamental group, $\delta_{\gamma}$ is the vanishing cycle corresponding to the generator $\gamma$. Since $\pi_1(U',t')$ is of finite index in $\pi_1(U,t_i)$, let this index be $m$. Then we have that
$$\gamma^m. \alpha=\alpha\pm m\langle \alpha,\delta_{\gamma}\rangle \delta_{\gamma}$$
where $\alpha$ belong to $H^{2d-1}(A_{t_i},\QQ)$. Since $H^{2d-1}(A_{t_i},\QQ)$ is $\pi_1(U',t')$ invariant and $\gamma^m$ belongs to $\pi_1(U',t')$, so we have that $\gamma^m.\alpha-\alpha$ belongs to $H^{2d-1}(A_{t_i},\QQ)$. Therefore we have that
$$m\langle \alpha,\delta_{\gamma}\rangle \delta_{\gamma}$$
is in $H^{2d-1}(A_{t_i},\QQ)$. Dividing by $m$ we get that $\langle \alpha,\delta_{\gamma}\rangle \delta_{\gamma}$ is in $H^{2d-1}(A_{t_i},\QQ)$, therefore again by Picard Lefschetz formula we have that
$\gamma.\alpha$ is in the subspace. So $H^{2d-1}(A_{t_i},\QQ)$ is $\pi_1(U,t_{i})$, stable, rather its image in $H^3(X_{t_i},\QQ)$ under $Z_{t_i*}$. So we get that either $H^{2d-1}(A_{t_i},\QQ)$ goes to zero under $Z_{t_i*}$ or it is all of $H^3(X_{t_i},\QQ)$. The second possibility says that $A_{t_i}$ maps surjectively to $A_1(X_{t_i})$, which if true for a general $t_i$, implies that $A_1(X)$, is representable, which is not true. Hence we have that $A_i$ is contained in the kernel of $Z_{t_i*}$ from $J(\wt{S_{t_i}})$ to $A_1(X_{t_i})$.

Now consider the cubics $X_t$ such that $l\subset X_t$. This is a Zariski closed subset in the parameter space of all smooth cubics. For such $X_t$, $S_{t_i}$ is the discriminant curve of the projection from $l$ onto $\PR^2$. So for such $X_t$, the kernel of $J(\wt{S_{t}})\to A_1(X_t)$ is $\pi^*(J(S_t))$, \cite{Be}[theorem 2.1 (iii)]. So the kernel is of dimension $g$ for a special but smooth $X_t$. But we can prove that the above map $J(\wt{S_t})\to A_1(X_t)$, for a smooth $X_t$, is a map of $\pi(U,t)$ modules, where $U$ parametrizes all smooth hyperplane sections of $X$. Therefore the dimension of the kernel is constant  (because the dimension of the kernel of  the map between corresponding local systems is constant). Therefore for a general $t$, the kernel $J(\wt{S_t})\to A_1(X_t)$ is of dimension $g$ and it contains $A_t$, hence also $\pi^* J(S_t)$. Therefore the kernel for a general $t$ is exactly equal to $A_t=\pi^*J(S_t)$.

This analysis tells us that $\prod_i A_i=A$ lies in the direct sum $\oplus_i \pi_i^*(J({S_{t_i}}))$, hence any element in the kernel of $Z_*$, is in $\pi^*(A_0({S}))$. Hence we have that $A_0(\wt{S})/\pi^*(A_0({S}))$ is isomorphic to $A_1(X)$.

\end{proof}

\begin{remark}

This analysis also tells us that $A_1(X)$ is the kernel of the push-forward from $A_0(\wt{S})$ to $A_0(S)$. Argument will be similar as above. By the previous section we know that, this kernel from $A_0(\wt{S})$ to $A_0(S)$ cannot be of the form $A_0(S')$ (since $A_0(\wt{S}),A_0(S)$ are not weakly representable), for some other surface $S'$, meaning that there  cannot exists a surface $S'$ and a correspondence supported on $S'\times \wt{S}$, such that $\Gamma_*$ is an isomorphism from $A_0(S')$ to  the kernel of $A_0(\wt{S})\to A_0(S)$.
\end{remark}

\section{Weak representability of Chow groups upto dimension two and its application to study 1-cycles on cubic fourfolds}

In the previous section we prove that the group $A_1(X)$ cannot be isomorphic $A_0(S)$, where $S$ is a smooth projective surface and $X$ is a cubic fourfold. In this section we weaken this notion of representability and we prove that there cannot exist a surjective homomorphism from $A_0(S)$ to $A_1(X)$, for some smooth projective surface $S$. For this we use the fact that $A_1(X)$ is isomorphic to the kernel of $A_0(S_1)\to A_0(S_2)$.

\begin{theorem}
\label{theorem3}
Let $S_1,S_2$ be two smooth projective surface with irregularity zero, such that $A_0(S_1),A_0(S_2)$ are not representable and $A_0(S_1)$ surjects onto $A_0(S_2)$ through a homomorphism induced by a correspondence and the homomorphism is not injective and the kernel is not weakly representable. Then there cannot exist a smooth projective surface $S$ such that and a correspondence $\Gamma$ on $S\times S_1$ such that $\Gamma_*$ is onto from $A_0(S)$ to $\ker(A_0(S_1)\to A_0(S_2))$.
\end{theorem}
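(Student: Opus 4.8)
The plan is to argue by contradiction, along the lines of the theorem above that forbids $\ker(R_*)$ from being \emph{isomorphic} to the $A_0$ of a smooth projective surface, but now pushing the argument far enough to conclude that a hypothetical surjection onto $\ker(R_*)$ must be the zero map. Suppose there were a smooth projective surface $S$ and a correspondence $\Gamma$ on $S\times S_1$ with $\Gamma_*\colon A_0(S)\to\ker(R_*)$ surjective. Fix any smooth hyperplane section curve $C_t\subset S$ (for some projective embedding of $S$) and consider $\Gamma_*j_{t*}\colon J(C_t)\to A_0(S_1)$, where $j_{t*}\colon A_0(C_t)=J(C_t)\to A_0(S)$ is the natural map. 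Its image is a finite-dimensional subgroup of $A_0(S_1)$, so by the Mumford--Roitman argument used in the earlier sections, after replacing the embedding of $S_1$ by a sufficiently ample one there is a hyperplane section curve $C'_s\subset S_1$, which we may take to be a general member of its linear system, and a homomorphism of abelian varieties $\Gamma_{s,t*}\colon J(C_t)\to J(C'_s)$ with $j'_{s*}\circ\Gamma_{s,t*}=\Gamma_*\circ j_{t*}$, where $j'_{s*}\colon J(C'_s)\to A_0(S_1)$ is the natural map.

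The key step is to show $\Gamma_{s,t*}=0$. Since $\im(\Gamma_*)=\ker(R_*)$ we have $R_*\circ\Gamma_*\circ j_{t*}=0$, hence $R_*\circ j'_{s*}\circ\Gamma_{s,t*}=0$; that is, $\im(\Gamma_{s,t*})\subseteq\ker\big(R_*\circ j'_{s*}\colon J(C'_s)\to A_0(S_2)\big)$. I would now run the Lefschetz pencil monodromy argument of Section~\ref{section1} for $S_1$: as $S_1$ has irregularity zero, $H^1(C'_s,\QQ)$ equals its vanishing cohomology and is an irreducible module under the monodromy of a pencil through $C'_s$, and $R_*\circ j'_{s*}$ is induced by an algebraic family of correspondences over the pencil, so its kernel is a monodromy-stable sub-Hodge structure whose associated abelian subvariety $B\subseteq J(C'_s)$ is either $0$ or all of $J(C'_s)$. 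The case $B=J(C'_s)$ would mean $R_*\circ j'_{s*}=0$ for a general member of every such pencil, hence (using that $R_*$ is surjective, so the images $R_*j'_{s*}(J(C'_s))$ of Jacobians of general hyperplane sections generate $A_0(S_2)$) that $A_0(S_2)=0$, contradicting the non-representability of $A_0(S_2)$. So $B=0$, i.e.\ $\ker(R_*\circ j'_{s*})$ is countable; since $\im(\Gamma_{s,t*})$ is an abelian subvariety of $J(C'_s)$ contained in the countable group $\ker(R_*\circ j'_{s*})$, it is trivial, so $\Gamma_*j_{t*}=j'_{s*}\Gamma_{s,t*}=0$. As $C_t$ was an arbitrary smooth hyperplane section of $S$ and $A_0(S)$ is generated by the images $j_{t*}(J(C_t))$ of the Jacobians of its smooth hyperplane sections — any $[p]-[q]$ being carried by a smooth member of a suitable very ample linear system through $p$ and $q$ — it follows that $\Gamma_*=0$, whence $\ker(R_*)=\im(\Gamma_*)=0$, contradicting that $\ker(R_*)$ is not weakly representable.

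The main obstacle, I expect, is the monodromy step: making rigorous — as in the spreading-out arguments of Section~\ref{section1} and of the preceding section on one-cycles on the cubic fourfold — that the monodromy-invariant part of $\ker(R_*\circ j'_{s*})$ is $0$ rather than all of $H^1(C'_s,\QQ)$. This is exactly where the surjectivity of $R_*$ and the non-representability of $A_0(S_2)$ enter, and it rests on the (routine but not wholly formal) facts that a general pencil through $C'_s$ is a Lefschetz pencil and that the Jacobians of general hyperplane sections already generate $A_0(S_2)$; one must also check that the curve $C'_s$, forced to capture $\im(\Gamma_*j_{t*})$, can simultaneously be taken general enough for this irreducibility. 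The Mumford--Roitman lifting of $\Gamma_*j_{t*}$ to an honest homomorphism of Jacobians is the other technical input, handled as in the earlier sections.
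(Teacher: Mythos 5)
Your proposal is correct and follows essentially the same route as the paper: both arguments restrict $\Gamma_*$ to Jacobians of hyperplane sections, lift to a homomorphism of Jacobians $\Gamma_{s,t*}$, and play the irreducibility of the Lefschetz monodromy on $H^1$ (using irregularity zero) against the countability of kernels coming from the non-representability of $A_0(S_1)$ and $A_0(S_2)$. The only difference is organizational: the paper applies the $0$-or-everything dichotomy to $\im(\Gamma_{s,t*})$ and derives its contradiction from an uncountable image sitting inside the countable kernel of $J(S_{1t})\to A_0(S_2)$, whereas you apply it to $\ker(R_*\circ j'_{s*})$, conclude $\Gamma_{s,t*}=0$, and then contradict the non-weak-representability of $\ker(R_*)$ --- logically the same content read in the opposite order.
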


\begin{proof}
We embed $S,S_1,S_2$, into some projective spaces. Consider a Lefschetz pencil on $S_1$ and on $S$. So we have a net on $S\times S_1$. The for a general member $(s,t)$ of $\PR^1\times \PR^1$, we have $J(S_s)$ mapping to $J(S_{1t})$. Let $\Gamma_{s,t}$ be the correspondence inducing this homomorphism. This would mean that we have a morphism of Hodge structures $\Gamma_{s,t*}$ from $H^1(S_s,\QQ)$ to $H^1(S_{1t},\QQ)$. Now we observe that image of $\Gamma_{s,t*}$ is $\pi_1(\PR^1\setminus \{0_1,\cdots,0_m\},t)$ stable. This is because of the following reason. The Jacobian $J(S_s)$ is mapping  into $J(S_{1t})$. So it gives rise to an abelian subvariety $A_t$ inside $J(S_{1t})$. Now this abelian subvariety, by extending scalars is defined over $\Spec(\CC(x))$, where $\CC(x)$ is the function field of the affine line. So attaching coefficients of the defining equations of $A_t,J(S_{1t})$, they are defined over a finite extension $L$ of $\CC(x)$. Then find a smooth projective curve $D'$ mapping finitely onto $\PR^1$, and having the function field equal to $L$. Then we spread the two abelian varieties $A_t, J(S_{1t})$ over a Zariski open $U$ in $D'$, throwing out more points we get that this two spreads give rise to a fibration over $U$, hence we have $\pi_1(U,t')$ acting on $H^1(S_{1t},\QQ)$ and on $im(\Gamma_{s,t*})$. Then it is a consequence of the Picard Lefschetz formula that image of $\Gamma_{s,t*}$ is $\pi_1(\PR^1\setminus \{0_1,\cdots,0_m\},t)$ stable. So by the irreducibility of the monodromy action we have that image of $\Gamma_{s,t*}$ is either zero or all of $H^1(S_{1t},\QQ)$. The first option is not possible since $\Gamma_*$ is surjective and onto the kernel $\ker(A_0(S_1)\to A_0(S_2))$, and this kernel is not weakly representable.  So the only option is that $\Gamma_{s,t*}$ is surjective. So suppose that the kernel of $J(S_s)\to A_0(S_1)$ is a countable union of translates of an abelian variety $A_s$ of $J(S_s)$. Then  the image of $A_s$ under $\Gamma_{s,t*}$ is giving rise to a $\pi_1(\PR^1\setminus \{0_1,\cdots,0_m\},t)$ stable subspace of $H^1(S_t,\QQ)$. So it is either $0$ or all of $H^1(S_t,\QQ)$. In the second case $J(S_{1t})$ maps to zero for a general $t$, which is a contradiction to the fact that $A_0(S_1)$ is non-representable hence the kernel of $J(S_{1t})\to A_0(S_1)$ is countable for a general $t$. So the second case is possible that is $A_s$ lies in the kernel of $J(S_s)\to J(S_{1t})$ and it is a proper abelian subvariety of $J(S_s)$ since $J(S_s)$ does not map to zero under $\Gamma_{s,t*}$. So we have that $J(S_s)$ modulo the kernel of $J(S_s)\to A_0(S_1)$ is inside the kernel of $J(S_{1t})\to A_0(S_2)$, which is countable since $A_0(S_1)\to A_0(S_2)$ is surjective and $A_0(S_2)$ is non-representable. But the above analysis says that it contains a uncountable set namely $J(S_s)$ modulo the kernel of $J(S_s)\to A_0(S_1)$, because otherwise $J(S_s)$ will be a countable union of its proper Zariski closed subsets. This is a contradiction, so there cannot exists $S$ and a correspondence $\Gamma$, such that $\Gamma_*$ from $A_0(S)$ to the kernel of $A_0(S_1)\to A_0(S_2)$ is surjective.
\end{proof}

\begin{definition}
Let $X$ be a fourfold. We say that $A_1(X)$ is weakly representable upto dimension $2$, if there exists finite many curves $C_i$, finitely many surfaces $S_j$ and correspondences $\Gamma_i$, $\Gamma_j$, such that
$$\oplus \Gamma_{i*}\oplus \Gamma_{j*}:\oplus J(C_i)\oplus A_0(S_j)\to A_1(X)$$
is onto.
\end{definition}

Now we prove the following.

\begin{theorem}
 Weak representability upto dimension $2$ is a birational invariant, meaning that if $X$ is birational to $Y$, then $A_1(X)$ is weakly representable upto dimension $2$ if and only if it is so for $A_1(Y)$.
\end{theorem}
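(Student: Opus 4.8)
The plan is to prove invariance by the standard device of resolving a birational map into blow-ups and blow-downs with smooth centers, and checking that each such elementary modification preserves weak representability up to dimension $2$. First I would reduce to the case where $Y = \Bl_Z X$ is the blow-up of $X$ along a smooth irreducible center $Z$; by the weak factorization theorem a birational map between smooth projective fourfolds factors as a chain of such blow-ups and their inverses, so it suffices to compare $A_1(X)$ and $A_1(\Bl_Z X)$, and then the ``if and only if'' follows by symmetry of the chain. The center $Z$ is smooth projective of dimension $0$, $1$, $2$, or $3$.

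Next I would invoke the blow-up formula for Chow groups: if $E \subset Y$ is the exceptional divisor, a $\PR^{c-1}$-bundle over $Z$ where $c = \codim_X Z$, then
$$
A_1(Y) \;\cong\; A_1(X) \;\oplus\; \bigoplus_{j} A_{1-j}(Z)
$$
where the summands $A_{1-j}(Z)$ (for $j$ ranging over the relevant projective-bundle degrees) are cut out inside $A_2(E)$ by the projective bundle formula. Concretely: if $\dim Z = 0$ the extra terms vanish and $A_1(Y) \cong A_1(X)$; if $\dim Z = 1$ the extra term is $A_0(Z) = J(Z)$ for the curve $Z$; if $\dim Z = 2$ the extra terms are $A_1(Z)$ (which is $A_0$ of the surface $Z$, shifted) and possibly $A_0(Z) = \Alb(Z)$, both of which are governed by correspondences from curves and surfaces; if $\dim Z = 3$ the extra terms are $A_1(Z)$ — but a threefold center inside a fourfold $X$ is a divisor, so the blow-up is an isomorphism and this case does not arise. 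Thus in every case that genuinely occurs, $A_1(Y)$ differs from $A_1(X)$ only by summands that are themselves $J$ of a curve or $A_0$ of a surface. Since weak representability up to dimension $2$ is precisely the condition that $A_1(X)$ be a quotient of a finite sum of such pieces by correspondences, and since the projections and inclusions in the blow-up formula are themselves induced by algebraic correspondences (the graph of the blow-down, and the universal $\PR^{c-1}$-bundle class on $E$), it follows that $A_1(X)$ is weakly representable up to dimension $2$ if and only if $A_1(Y)$ is: one direction adds finitely many $J(C_i)$ and $A_0(S_j)$ to the generating system, the other composes a given surjection with the split projection onto the $A_1(X)$-summand.

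The main obstacle, and the step requiring care, is verifying that the summands $A_{1-j}(Z)$ appearing in the blow-up formula are, for $Z$ of dimension $2$, actually of the form $A_0$ of a smooth projective surface and $J$ of a smooth projective curve — i.e. that these pieces are themselves weakly representable up to dimension $2$ in an \emph{unconditional} way, so that they can always be thrown into the generating system on the $Y$-side regardless of the status of $X$. For $\dim Z \le 1$ this is automatic ($A_0$ of a curve is its Jacobian, $A_1$ of a curve is $\ZZ$). For $\dim Z = 2$, $A_1(Z)$ is literally $\CH_1(Z)$ of a surface, whose algebraically trivial part is $A_0$ of that same surface (by hard Lefschetz / duality on the surface, or directly since $1$-cycles on a surface are divisors), and $A_0(Z) = \Alb(Z)$ which is dominated by $J$ of a hyperplane section curve; both are manifestly covered by correspondences from curves and surfaces. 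Once this bookkeeping is in place the argument is purely formal. I would also remark, following the style of the earlier sections, that all the maps in the blow-up decomposition are induced by honest algebraic cycle correspondences, which is what licenses phrasing everything in terms of the $\Gamma_{i*}$, $\Gamma_{j*}$ of the definition; this is where one cites the construction of the blow-up isomorphism for Chow groups.
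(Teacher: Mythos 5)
Your proposal is correct and rests on the same computational core as the paper's proof, namely the blow-up formula for Chow groups of a blow-up along a smooth center and the observation that the extra summands in degree one are Chow groups of curves and surfaces; but the reduction step is genuinely different. The paper blows up the indeterminacy locus of $X\dashrightarrow Y$ once to obtain a morphism $\wt{X}\to Y$, reads off $A_1(\wt{X})\cong A_1(X)\oplus J(C)$ or $A_1(X)\oplus A_0(S)\oplus A^1(S)$, and then descends weak representability along the surjection $A_1(\wt{X})\to A_1(Y)$ induced by pushforward; this is shorter but implicitly assumes the indeterminacy locus is smooth of pure dimension and that a single blow-up resolves the map, which in general requires a whole sequence of blow-ups (resolution of the graph). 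Your route through weak factorization replaces that one-step resolution by a chain of elementary blow-ups and blow-downs with smooth centers, which makes the reduction airtight at the cost of invoking a heavier theorem, and it handles the converse direction cleanly via the split projection onto the $A_1(X)$-summand rather than the paper's appeal to symmetry. One small slip in your bookkeeping: for a surface center $Z$ the relevant extra piece is not ``$A_1(Z)$, whose algebraically trivial part is $A_0(Z)$'' --- one-cycles on a surface are divisors, so $A_1(Z)=\Pic^0(Z)$, not $A_0(Z)$ (the paper accordingly writes the summand as $A_0(S)\oplus A^1(S)$). This does not damage the argument, since $\Pic^0(Z)$ is an abelian variety and hence is dominated, via a correspondence, by the Jacobian of a sufficiently ample curve on $Z$, so it can still be absorbed into the generating system of $J(C_i)$'s and $A_0(S_j)$'s.
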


\begin{proof}
Let $X$ be birational to $Y$. Then the indeterminacy locus of the birational map $X\dashrightarrow Y$ is either of codimension $2$ or greater. So that if we blow up $X$ along the indeterminacy locus we get $\wt{X}$ and the Chow group $A_1(\wt{X})$ is isomorphic to $A_1(X)\oplus J(C)$ or $A_1(X)\oplus A_0(S)\oplus A^1(S)$, depending on whether the indeterminacy locus is a curve or a surface. So if $A_1(X)$ is weakly representable upto dimension $2$, then so is $A_1(\wt{X})$. Since $\wt{X}$ maps surjectively onto $Y$, we have that $A_1(Y)$ is weakly representable upto dimension $2$. Similarly changing the role of $X$ and $Y$ we get that $A_1(Y)$ is weakly representable upto dimension $2$ implies that it is so for $A_1(X)$.
\end{proof}

Similar argument as in previous theorem \ref{theorem3} will tell us that for the kernel of $A_0(S_1)\to A_0(S_2)$ where $A_0(S_i)$ is non-representable for $i=1,2$, there cannot exist a surjection from $A_0(S)\oplus J(C)$, for some smooth projective curve $C$ and a smooth projective surface $S$. This means that we have the following.

\begin{theorem}
\label{theorem4}
If $X$ is a cubic fourfold birational to $\PR^4$, then $X$ cannot be obtained as a single blow up followed by a single blow down.
\end{theorem}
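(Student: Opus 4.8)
The plan is to argue by contradiction. Suppose $X$ is realised from $\PR^4$ by one blow-up followed by one blow-down: there is a smooth fourfold $W$ together with a blow-up morphism $\sigma: W\to\PR^4$ centred at a smooth subvariety $Z\subset\PR^4$, and a blow-down morphism $\tau: W\to X$, so that $W\cong\Bl_{Z'}X$ for a smooth subvariety $Z'\subset X$. The idea is simply to compute $A_1(W)$ in these two ways and compare, using the blow-up formula for Chow groups of one-cycles (the same formula used above to prove birational invariance of weak representability up to dimension two).

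First I would observe that, since $Z'$ is smooth of codimension $\geq 2$ in the fourfold $X$, it has dimension $\leq 2$, and the blow-up formula gives $A_1(W)\cong A_1(X)\oplus M$, where $M=0$, $M\cong J(Z')$, or $M\cong A_0(Z')\oplus A^1(Z')$ according as $\dim Z'$ equals $0$, $1$, or $2$. In particular $A_1(W)$ surjects onto $A_1(X)$, so $A_1(X)$ is a quotient of $A_1(W)$. Then I would apply the same formula to $\sigma$ and use $A_1(\PR^4)=0$: this gives $A_1(W)=0$ if $\dim Z=0$; $A_1(W)\cong J(Z)$, the Jacobian of a smooth projective curve, if $\dim Z=1$; and $A_1(W)\cong A_0(Z)\oplus A^1(Z)$ if $\dim Z=2$. (If $Z$ is reducible one adds the contributions of its components; the $J$-type and $A^1$-type contributions are abelian varieties and are dealt with in the next step.)

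Next I would absorb the abelian-variety error terms. Since $A^1(Z)=\Pic^0(Z)$ is an abelian variety, there is a smooth projective curve $C$ and a surjection $J(C)\twoheadrightarrow A^1(Z)$; hence $A_1(W)$, and therefore its quotient $A_1(X)$, is dominated by $0$, by the Jacobian $J(C)$ of a single smooth projective curve, or by $A_0(S)\oplus J(C)$ for a single smooth projective surface $S=Z$ together with a single smooth projective curve $C$, according to the value of $\dim Z$. Now I invoke the earlier results: by the Remark following the isomorphism $A_1(X)\cong A_0(\wt{S})/\pi^*A_0(S)$ one has $A_1(X)\cong\ker\!\big(A_0(\wt{S})\to A_0(S)\big)$, a kernel between the zero-cycle groups of two surfaces with non-representable $A_0$ (the hypotheses of Theorem \ref{theorem3} being met for the discriminant surface $S$ and its double cover $\wt{S}$, as established earlier). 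This group is non-zero; by Schoen \cite{Sc} it is not dominated by the Jacobian of a single curve; and by the argument sketched in the paragraph immediately preceding this theorem (an adaptation of the proof of Theorem \ref{theorem3}) it admits no surjection from $A_0(S)\oplus J(C)$ for a single surface $S$ and a single curve $C$. Each of the three possibilities for $\dim Z$ is thereby ruled out, which is the desired contradiction.

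The only genuine input is the non-weak-representability of $\ker(A_0(\wt{S})\to A_0(S))$, which rests on the Lefschetz-pencil and monodromy arguments of the previous sections; everything else is bookkeeping with the blow-up formula. The step needing the most care is ensuring that all the auxiliary summands produced by $\sigma$ and $\tau$ — the terms $A^1(Z)$ and $A^1(Z')$, and, if a centre is reducible, the several curve Jacobians — are abelian varieties and so can be collected into a single $J(C)$, so that one lands precisely in the configuration $A_0(S)\oplus J(C)$ that is excluded above; in the one remaining edge case, where a centre has two two-dimensional components, one instead invokes the finite-direct-sum version of that exclusion, which follows from the very same monodromy argument.
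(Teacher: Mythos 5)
Your proposal is correct and follows essentially the same route the paper intends: the paper offers no written proof of this theorem beyond the sentence preceding it, which asserts exactly the two ingredients you use — the blow-up formula for $A_1$ applied to both the blow-up of $\PR^4$ and the blow-down to $X$ (as in the birational-invariance theorem), together with the claim that $\ker(A_0(S_1)\to A_0(S_2))$ admits no surjection from $A_0(S)\oplus J(C)$. Your write-up is in fact more careful than the paper's, spelling out the reduction of the $A^1(Z)$ and reducible-centre summands to a single $J(C)$, which the paper leaves implicit.
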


\subsection{The example of the $A_1$ of a cubic fourfold containing a plane}

Let $X$ be a cubic fourfold containing a plane $P$. Then we project from $P$, onto $\PR^2$, so that the restriction of this projection onto $X$ is a rational map. So blowing up the indeterminacy locus of this rational map, which is $P$, we get $\wt{X_P}$, which inherits a quadric bundle structure over $\PR^2$. Suppose that this quadric bundle satisfies the assumption of proposition 2.3 in \cite{H1}. Let us consider the projection from a point on this quadric bundle onto $\PR^4$, it is known that this map is birational. So the quadric bundle is rational. Now by the previous theorem \ref{theorem4}, we get that the quadric bundle cannot be obtained by one blow up along a curve or surface in $\PR^4$ followed by one blow down. In that case the blow up of the cubic is weakly representable upto dimension $2$ and not only that, since we obtain $\wt{X_P}$ by blowing up a plane in $X$, we have $A_1(X)$ isomorphic to $A_1(\wt{X_P})$. So we get that $A_1(X)$ is isomorphic to $A_0(S)\oplus \Pic(S)$ for some surface $S$ in $\PR^4$ or isomorphic to $J(C)$ for some smooth projective curve $C$ in $\PR^4$, which is not possible. Therefore the quadric bundle $\wt{X_P}$ is obtained by more than one blowing up of $\PR^4$, followed by a sequence of blow downs. Since $\PR^4$ is weakly representable upto dimension $2$, it will follow that the quadric bundle is weakly representable upto dimension $2$.

\end{document}